\documentclass{amsart}

\makeatletter
\def\eqalign#1{\null\,\vcenter{\openup\jot\m@th
  \ialign{\strut\hfil$\displaystyle{##}$&$\displaystyle{{}##}$\hfil
      \crcr#1\crcr}}\,}
\makeatother 

\newcommand\meshing[1][-1pt]{%
  \lineskiplimit=#1\relax
  \normallineskiplimit=#1\relax
  \lineskip=#1\relax
  \normallineskip=#1\relax
  \advance\jot-#1\relax %compensate for possible squeezing of math alignments
  \advance\jot 1pt %to the tune of 1pt(=regular \lineskiplimit)-#1
}

\usepackage{amssymb}
\usepackage{tikz-cd}

\newtheorem{theorem}{Theorem}[section] 
\newtheorem{lemma}[theorem]{Lemma}

\newtheorem{example}[theorem]{Example}

\newtheorem{definition}[theorem]{Definition}

\def\abs#1{|#1|}
\def\qandq{\quad\textrm{and}\quad}

\def\Smax{S_{\rm max}}
\def\artin#1#2{\bigl( {#1 \over #2} \bigr)}
\def\Z{\mathbf Z}
\def\Q{\mathbf Q}
\def\C{\mathbf C}
\def\isar{\ \smash{\mathop{\longrightarrow}\limits^{\thicksim}}\ }

\def\Kbar{\overline K}
\def\cyclK{K_p^{\rm cycl}}
\def\antiK{K_p^{\rm anti}}
\def\antiKt{K_3^{\rm anti}}
\DeclareMathOperator\Gal{Gal}
\DeclareMathOperator\disc{disc}
\DeclareMathOperator\Pic{Pic}
\DeclareMathOperator\image{Im}
\def\gp{\mathfrak{p}}
\def\gf{\mathfrak{f}}
\def\OK{\mathcal{O}}
\def\O{\mathcal{O}}

\title{Explicit computations in Iwasawa theory} 

\author{Reinier Br\"oker}
\address{Center for Communications Research\\ 
Princeton, NJ 08540\\
United States}
\email{rmbroke@idaccr.org}

\author{David Hubbard}
\address{35 Holt Circle\\
  Hamilton, NJ 08619\\
United States}
\email{dhubbard@erols.com}

\author{Lawrence C. Washington}
\address{University of Maryland\\ 
College Park, MD 20742\\
United States}
\email{lcw@math.umd.edu}

\subjclass{11R23 (primary), 14K25 (secondary).}

\begin{document}

\begin{abstract}
We give two algorithms to compute layers of the anticyclotomic $\Z_3$-extension
of an imaginary quadratic field. The first is based on complex multiplication
techniques for nonmaximal orders; the second is based on Kummer theory. As
an illustration of our results, we
use the mirroring principle to derive results on the structure of class groups
of nonmaximal orders.
\end{abstract}
\maketitle

\section{Introduction} \label{intro}

Let $K$ be an imaginary quadratic field, with fixed algebraic closure
$\Kbar$, and for a fixed odd prime $p$, let $K^p \subset \Kbar$ be 
the compositum of all $\Z_p$-extensions. The Galois group of $K^p/K$ 
is isomorphic to $\Z_p^2$,
and there are two ``natural'' $\Z_p$-extensions of $K$ inside $K^p$. The
{\it cyclotomic $\Z_p$-extension\/} $\cyclK$ is the $p$-part of the
extension $\bigcup_{n \geq 1} K(\zeta_{p^n})\subset\Kbar$. 
The extension $\cyclK/\Q$ is procyclic. The {\it anticyclotomic 
$\Z_p$-extension\/} $\antiK$ is implicitly defined by the property that $\antiK
\subset\Kbar$ is the unique $\Z_p$-extension of $K$ that is {\it prodihedral\/} over $\Q$,
meaning that we have\meshing
$$
\Gal(\antiK/\Q) \cong \Z_p \rtimes \Z/2\Z,
$$
where the generator of $\Gal(K/\Q) \cong \Z/2\Z$ acts by inversion on $\Z_p$.

The fields $\cyclK$ and $\antiK$ are linearly disjoint over $K$, and their
compositum equals $K^p$. Since both have Galois group $\Z_p$, both extensions
are unramified outside of $p$ by \cite[Prop.\ 13.2]{lcw}.  This article focuses on
{\it explicitly computing\/} layers of $\antiKt$ for the case where $3$ is
ramified in $K$. By computing, we mean that on input
of a positive integer $k$, we want to compute an irreducible 
polynomial $f \in K[x]$ of degree $3^k$ with\meshing
$$
K_{k} = K[x]/(f(x)) \subset \antiKt.
$$
The Galois group $\Gal(K_{k}/K)$ is cyclic of order $3^k$.

Although we believe that most of our techniques can be generalized 
to arbitrary $p$
and arbitrary splitting behavior of $p$, our restrictions
to $p=3$ and to the case that $3$ ramifies in $K$ allow us to highlight the
technical considerations that arise in those cases. Furthermore, we can use
the {\it mirror principle\/}, see Section \ref{mirror}, to obtain a criterion 
for when the $3$-parts of certain class groups are cyclic.

The main result of this
paper is that we have explicit algorithms to compute $K_k$. We use
complex multiplication (CM) techniques in Sections \ref{ringclassfield} and
\ref{subfield}, and Kummer techniques in \ref{kummersec}. The CM technique
works for any $K$; the Kummer technique is more restricted.

Previous attempts to compute initial layers of anticyclotomic $\Z_p$-extensions
of an imaginary quadratic field include \cite{brink, CK,kimoh, VH}. These papers use
a mix of class field theory and decomposition laws of primes. 

Perhaps not surprisingly, the {\it run times\/} of our algorithms are inherently 
exponential. Not only are the outputs of the algorithms polynomials of degree
$3^k$, but the CM approach computes, as intermediate step, a polynomial
whose degree and logarithmic height of its coefficients are both
$\widetilde O(|\disc(K)|^{1/2} 3^k)$. For the Kummer approach, we need a 
polynomial of degree $O(3^{k})$ over an auxilliary extension of 
degree $O(3^k)$; furthermore,
the coefficients are themselves symmetric expressions in $O(3^k |\disc(K)|)$ terms. 

Both approaches have their merits. Indeed, whereas the CM method requires the
full class group of $K$ as intermediate step, the Kummer method only looks at
the prime 3. If the class group is large, then the Kummer method is 
better for small $n$. However, the Kummer method requires working over auxilliary
extensions and this makes the method slower for larger $n$. 

We detail various techniques
we can use to reduce the size of the generating polynomial for $K_k$ in 
Section \ref{shim}. We illustrate our techniques with a variety of examples.
All examples were done using the computer algebra package MAGMA \cite{magma}
and the CM software package \cite{AECM}.

\section{Anticyclotomic extension and ring class fields} \label{ringclassfield}

Throughout this section, let $K = \Q(\sqrt{D})$ be a fixed imaginary 
quadratic field of discriminant $D$ in which $3$ is ramified. We let
$\OK$ be the maximal order of $K$. For any integer $k \geq 1$, the $k$-th layer $K_k$ of the anticyclotomic
$\Z_3$-extension of $K$ is a generalized dihedral extension of $\Q$. Hence, by
Bruckner's result (see \cite{bruckner} or \cite[Thm. 9.18]{Cox}), we know that $K_k$ 
is contained in a {\it ring class field\/}
for $K$. Since $K_k$ is unramified outside $3$, it follows that $K_k$ is contained in
a ring class field for an order $\O_N = \Z + 3^N \OK$ of index $3^N$ for 
some $N \geq 1$. 

In order to bound the exponent, we analyze ring class fields. We let $H_N$ be the
ring class field for the order $\O_N$. With this notation, $H_0$ is the Hilbert
class field of $K$. The extension $H_N/K$ is abelian and unramified outside $3$. 
The Artin map gives an isomorphism $\Pic(\O_N) \isar \Gal(H_N/K)$, with $\Pic(\O_N)$ 
the {\it Picard group\/} of $\O_N$. We have a natural exact sequence
$$
1 \rightarrow (\OK/3^N\OK)^*/\image(\OK^*)(\Z/3^N\Z)^* \rightarrow \Pic(\O_N) \rightarrow \Pic(\OK) \rightarrow 1,
$$
where the last map is given by $[I] \mapsto [I\cdot \OK]$. The kernel of the map
$\Pic(\O_N) \rightarrow \Pic(\OK)$ is naturally isomorphic to $\Gal(H_N/H_0)$; the
following lemma gives the structure of this group.

\begin{lemma}\label{galgroup} With the notation from the previous paragraph, we have
$$
\Gal(H_N/H_0) \cong
\left\{
\begin{array}{ll}
\Z/3^{N-1}\Z & \textrm{if\ } D = -3\\
\Z/3\Z \times \Z/3^{N-1}\Z & \textrm{if\ } D \not = -3, D \equiv -3 \bmod 9\\
\Z/3^N\Z & \textrm{if\ } D \equiv 3 \bmod 9
\end{array}
\right.
$$
for $N \geq 1$.
\end{lemma}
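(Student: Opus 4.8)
The plan is to compute the group on the right of the exact sequence, namely $Q := (\OK/3^N\OK)^* / \image(\OK^*)(\Z/3^N\Z)^*$, which the preceding paragraph identifies with $\Gal(H_N/H_0)$. Since $3$ is ramified, $(3)=\gp^2$ and $\OK/3^N\OK = \OK/\gp^{2N}$ is a local ring with residue field $\mathbf{F}_3$; its unit group has order $2\cdot 3^{2N-1}$ and splits as $\mathbf{F}_3^* \times U$, where $U = (1+\gp)/(1+\gp^{2N})$ is its Sylow $3$-subgroup. The image of $(\Z/3^N\Z)^*$ contains $-1$, which already generates $\mathbf{F}_3^*$, so the prime-to-$3$ part dies in $Q$ and we are reduced to a $3$-group: $Q \cong U/C$, where $C$ is generated by the image of the one-units $1+3\Z$ (the $3$-part of $(\Z/3^N\Z)^*$), together with the image of $\langle \zeta_3\rangle$ when $D=-3$, this being the only case in which $\OK^*$ contributes beyond $\{\pm 1\}$.

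Next I would pin down the local structure. Completing at $\gp$ gives $\OK\otimes\Z_3 = \Z_3[\pi]$ with $\pi = \sqrt{D}$ a uniformizer, since writing $D = 3D_0$ with $3\nmid D_0$ the minimal polynomial $x^2 - 3D_0$ is Eisenstein; thus $\gp^2=(3)$ and $\gp^{2N}=(3^N)$. Because $2 > e/(p-1) = 1$, the $3$-adic logarithm is an isomorphism of $\Z_3$-modules $1+\gp^2 \isar \gp^2 = 3\OK$, so $U_2 := (1+\gp^2)/(1+\gp^{2N}) \cong 3\OK/3^N\OK$, which in the $\Z_3$-basis $\{3,3\pi\}$ is $\Z/3^{N-1}\Z \oplus \Z/3^{N-1}\Z$. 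The one-units $1+3\Z$ map under $\log$ into the rational line $3\Z_3$ and generate it modulo $3^N$, so for $D\neq -3$ the group $C$ is exactly the first summand and $A := U_2/C \cong 3\Z_3\pi/3^N\Z_3\pi \cong \Z/3^{N-1}\Z$. Finally $U/U_2 \cong \gp/\gp^2 \cong \mathbf{F}_3$ is generated by the class of $g := 1+\pi$, so $Q = U/C$ is generated by $\bar g$ and the cyclic group $A$ of order $3^{N-1}$, with $\bar g^3 \in A$.

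The crux is to locate $\bar g^3$ inside $A\cong \Z/3^{N-1}\Z$, and this is where the valuation bookkeeping must be done carefully. A direct expansion gives $g^3 = (1+\pi)^3 = 1 + 9D_0 + 3(1+D_0)\pi$; after multiplying by the rational one-unit $(1+9D_0)^{-1}\in C$ it suffices to analyze $1 + 3(1+D_0)u\pi$ with $u$ a unit, and the $\pi$-component of its logarithm is $3(1+D_0)\times(\text{unit})$, the squared term being rational (a multiple of $\pi^2=3D_0$) and hence absorbed into $C$. Writing $\bar g^3 = \eta^c$ for a generator $\eta$ of $A$, this shows $v_3(c) = v_3(1+D_0)$; what matters is only whether $3\mid c$. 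The abelian group $Q$ then has presentation matrix $\left(\begin{smallmatrix} 3 & -c \\ 0 & 3^{N-1}\end{smallmatrix}\right)$, whose Smith normal form is $\mathrm{diag}(1,3^N)$ when $c$ is a unit and $\mathrm{diag}(3,3^{N-1})$ when $3\mid c$. Since $3\nmid D_0$, we have $3\nmid (1+D_0)\iff D_0\equiv 1\iff D\equiv 3\bmod 9$, giving $Q\cong \Z/3^N\Z$, while $3\mid(1+D_0)\iff D\equiv -3\bmod 9$, giving $Q\cong \Z/3\Z\times\Z/3^{N-1}\Z$.

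It remains to treat $D=-3$, where $D_0=-1$ forces $c=0$, so before accounting for $\zeta_3$ one has $Q\cong \Z/3\Z\times\Z/3^{N-1}\Z$; indeed $g^3 = (1+\sqrt{-3})^3 = -8$ is rational, hence in $C$, confirming $\bar g^3 = 1$. Moreover $1+\sqrt{-3} = 2\zeta_6 = -2\,\zeta_3^{2}$ with $-2 = 1-3\in C$, so $\bar g = \bar\zeta_3^{-1}$; thus $\langle\bar\zeta_3\rangle = \langle\bar g\rangle$ is precisely the $\Z/3\Z$ complementary to $A$, and quotienting it out leaves $Q/\langle\bar\zeta_3\rangle \cong A \cong \Z/3^{N-1}\Z$. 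The boundary case $N=1$ is consistent under the convention $\Z/3^0\Z = 0$ (for $D=-3$ this recovers $H_1=H_0$). The main obstacle is the third paragraph: the cube computation together with the logarithm expansion, where one must verify that the rational term is swallowed by $C$ and that only the divisibility $3\mid(1+D_0)$, not the precise power, governs the Smith normal form.
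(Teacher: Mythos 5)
Your proof is correct, and while it starts from the same reduction as the paper --- everything comes down to the structure of $(\OK/3^N\OK)^*$ modulo the rational units, together with $\zeta_3$ when $D=-3$ --- the core local computation is carried out by a genuinely different route. The paper invokes the classification of tamely ramified quadratic extensions of $\Q_3$ and splits the local unit group as roots of unity times a torsion-free part: for $D\equiv -3\bmod 9$ the local $\zeta_3$ supplies the $\Z/3\Z$ factor directly from $A^*=\langle-\zeta_3\rangle\times(1+\gp^2)$, while for $D\equiv 3\bmod 9$ it uses the cubing isomorphisms and the fact that $1+3$ and $1+\sqrt{3}$ generate $1+\gp$ over $\Z_3$. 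You instead treat both congruence classes uniformly: the logarithm identifies $(1+\gp^2)/(1+\gp^{2N})$ with $3\OK/3^N\OK$, the explicit cube $(1+\pi)^3=1+9D_0+3(1+D_0)\pi$ locates the image of the generator of $U/U_2$ inside that quotient, and the Smith normal form of the presentation matrix converts the single divisibility condition $3\mid(1+D_0)$ into the dichotomy between $\Z/3^N\Z$ and $\Z/3\Z\times\Z/3^{N-1}\Z$. Your version makes the dependence on $D\bmod 9$ completely explicit and spells out a point the paper leaves implicit (that the image of the rational one-units is a direct summand of $(1+\gp^2)/(1+\gp^{2N})$, so that the quotient in the $D\equiv-3\bmod 9$ case really is $\Z/3\Z\times\Z/3^{N-1}\Z$ and not $\Z/3^N\Z$); the paper's version is shorter and identifies structurally where the extra $\Z/3\Z$ comes from, namely the local cube root of unity, which is what motivates the definition of $\alpha_N$ and $H_N'$ immediately after the lemma. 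Your treatment of $D=-3$ via $1+\sqrt{-3}=-2\zeta_3^2$ is exactly the global--local coincidence the paper appeals to in its final sentence.
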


\begin{proof} \meshing
Let $\gp \mid (3)$ be the ideal of norm $3$ in $\OK$. 
We have $(\OK/\gp^{2N} )^* \cong (A/\gp^{2N})^*$, where $A$ denotes the
completion of $\OK$ at $\gp$. The ring $A$ is a tamely ramified 
quadratic extension of $\Z_3$, and it well-known that there are only
{\it two\/} such rings up to isomorphism.
For $D \equiv -3 \bmod 9$, we have $A = \Z_3[\sqrt{-3}] = \Z_3[\zeta_3]$,
and $A = \Z_3[\sqrt{3}]$ for $D \equiv 3 \bmod 9$. We analyse both cases
separately.

The unit group of $A = \Z_3[\zeta_3]$ equals 
$$
A^* = \langle -\zeta_3 \rangle \times (1 + \gp^2),
$$
and $1+\gp^2$ is torsion free. Hence, $1+\gp^2$ is a free $\Z_3$-module of rank 2.
We get
$$
(A/3^NA)^* \cong \langle -\zeta_3 \rangle \times (1+\gp^2)/(1+\gp^{2N})
\cong \Z/6\Z \times (\Z/3^{N-1}\Z)^2,
$$
and hence
$$
(A/3^NA)^*/(\Z/3^N\Z)^* \cong \Z/3\Z \times \Z/3^{N-1}\Z.
$$

For $A = \Z_3[\sqrt{3}]$, we have
$$
A^* = \langle -1 \rangle \times (1+\gp),
$$
and since $\zeta_3$ is not contained in $A$, the $\Z_3$-module $1+\gp$ is
torsion free and hence a free rank 2 module. By iteratively applying the
``cubing isomorphism'' $1+\gp^k \isar 1+\gp^{k+2}$ we see that
$$
(A/3^NA)^* \cong \langle -1 \rangle \times (1+\gp)/(1+\gp^{2N})
\cong \Z/2\Z \times \Z/3^N\Z \times \Z/3^{N-1}\Z
$$
holds. Since the module $1+\gp$ is generated over $\Z_3$ by $1+3$ and 
$1+\sqrt{3}$, we get
$$
(A/3^NA)^* /(\Z/3^N\Z)^* \cong \Z/3^N\Z.
$$

We have $\OK^* = \{ \pm 1\}$ for $D<-3$, and 
the only case where the local cube root of unity exists globally 
is $D=-3$. Quotienting by $\image(\OK^*)$ gives the lemma.
\end{proof}

For $D \equiv -3 \bmod 9$ with $D<-3$, we let $\alpha_N \in \OK$ be an element that
is congruent to $\zeta_3 \in A$ modulo $3^{N}$. (As in the proof of
Lemma \ref{galgroup}, $A$ denotes the completion of $\OK$ at $\gp$.)
This element $\alpha_N$ determines 
an {\it Artin symbol\/} $\artin{\alpha_N}{H_{N}/H_0} \in \Gal(H_{N}/H_0)$.
We let $H_{N}'$ be the fixed field of the 
order 3 subgroup $\langle \artin{\alpha_N}{H_{N}/H_0}\rangle$ and put
$$
H_\infty = \left\{
\begin{array}{ll}
\bigcup_{N \geq 1} H_N'/H_0 & \textrm{for\ } D \equiv -3 \bmod 9 \textrm{\ and\ }
D \not = -3 \\
\bigcup_{N \geq 1} H_N/H_0 & \textrm{otherwise.}
\end{array}
\right.
$$

\begin{theorem}\label{expbound}
Let $K_k$ be the $k$-th layer of the anticyclotomic $\Z_3$-extension of $K$. Then
$K_k$ is contained in the ring class field for the order $\O_{k+1} = \Z+3^{k+1}\OK$ of
index $3^{k+1}$.
\end{theorem}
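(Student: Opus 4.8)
The plan is to pin down $\antiKt$ inside the ring class field tower $\bigcup_N H_N$ and then read off the level from Lemma~\ref{galgroup}. Since every layer $K_k$ is dihedral over $\Q$, Bruckner's result already places $K_k$ in some $H_N$, so it suffices to identify the compositum $\antiKt\cdot H_0$ with the tower $H_\infty$ constructed above and to count layers. First I would compute the Galois group of the whole tower over $H_0$ by passing to the limit in Lemma~\ref{galgroup}: writing $L_N=\Gal(H_N/H_0)\cong(\OK/3^N\OK)^*/\image(\OK^*)(\Z/3^N\Z)^*$, one has $\Gal(\bigcup_N H_N/H_0)=\varprojlim_N L_N$. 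For $D=-3$ and for $D\equiv 3\bmod 9$ this inverse limit is a free $\Z_3$-module of rank one, while for $D\equiv -3\bmod 9$ with $D\neq -3$ it is $\Z/3\Z\times\Z_3$, the torsion factor being generated by the Artin symbol of $\alpha_N\equiv\zeta_3$. By construction $H_\infty$ is exactly the fixed field of this torsion subgroup, so $\Gal(H_\infty/H_0)\cong\Z_3$ in every case.

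Next I would show $\antiKt\cdot H_0=H_\infty$. Because $\cyclK$ is abelian over $\Q$, no nontrivial subextension of it is dihedral, so on the rank-one free part of $\varprojlim_N L_N$ the involution $c$ acts by $-1$; consequently every $\Z_3$-extension of $K$ contained in $\bigcup_N H_N$ is anticyclotomic, and in particular $\antiKt\subseteq\bigcup_N H_N$ with $\Gal(\antiKt\cdot H_0/H_0)\cong\Z_3$. The restriction map $\Gal(\bigcup_N H_N/H_0)\to\Gal(\antiKt\cdot H_0/H_0)$ is surjective onto a torsion-free group, so it kills the torsion subgroup $\langle\alpha_N\rangle$ and restricts to an isomorphism on the free quotient. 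Hence $\antiKt\cdot H_0$ is precisely the fixed field of the torsion, that is, $H_\infty$.

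Finally I would translate back to $K_k$ and invoke Lemma~\ref{galgroup} once more for the numeric bound. Writing $\antiKt\cap H_0=K_n$ for some $n\geq 0$, restriction identifies $\Gal(\antiKt\cdot H_0/H_0)\cong\Gal(\antiKt/K_n)$ and sends $K_k\cdot H_0$ to the $(k-n)$-th layer of $H_\infty/H_0$. Lemma~\ref{galgroup} then shows that the $m$-th layer of $H_\infty/H_0$ lies inside $H_{m+1}$: indeed $\Gal(H_{m+1}/H_0)\cong\Z/3^m\Z$ when $D=-3$, $\Gal(H'_{m+1}/H_0)\cong\Z/3^m\Z$ with $H'_{m+1}\subseteq H_{m+1}$ when $D\equiv -3\bmod 9$ and $D\neq -3$, and $\Gal(H_m/H_0)\cong\Z/3^m\Z$ when $D\equiv 3\bmod 9$. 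Taking $m=k-n\leq k$ and using the nesting $H_{m+1}\subseteq H_{k+1}$ gives $K_k\subseteq K_k\cdot H_0\subseteq H_{k+1}$, as claimed.

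The main obstacle is the middle step: correctly matching the anticyclotomic $\Z_3$-direction with the torsion-free quotient of $\varprojlim_N L_N$, and in particular checking in the case $D\equiv -3\bmod 9$ that the extra $\Z/3\Z$ coming from $\zeta_3$ is exactly the part removed in passing from $H_N$ to $H'_N$. The possible shift $n>0$, which occurs only if $\antiKt$ meets $H_0$ nontrivially, merely improves the bound and so creates no difficulty for the stated inequality; it would be relevant only in deciding whether the exponent $k+1$ is optimal.
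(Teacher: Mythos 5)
Your argument is correct and follows essentially the same route as the paper: identify $\Gal(H_\infty/H_0)\cong\Z_3$ via Lemma~\ref{galgroup}, match the anticyclotomic direction with $H_\infty$ using the generalized dihedral structure of $H_N/\Q$, and read off the level $k+1$ by inspecting the sizes in Lemma~\ref{galgroup}. The paper's proof is a terser version of yours, leaving implicit the middle step you spell out (that any $\Z_3$-quotient of $\Gal(\bigcup_N H_N/H_0)$ kills the torsion factor, so $\antiKt\cdot H_0$ lands in $H_\infty$, i.e.\ in the $H_N'$ for $D\equiv-3\bmod 9$, $D\neq-3$).
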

\begin{proof} It is clear that $H_N/\Q$ is generalized dihedral. From the relation
$$
\O_N \subseteq \O_M \Longrightarrow H_M \subseteq H_N
$$
from class field theory, also known as the {\it Anordnungssatz\/} for
ring class fields, and Lemma \ref{galgroup} we see that 
$$
\Gal(H_\infty/H_0) \cong \Z_3.
$$
An inspection of the sizes in Lemma \ref{galgroup} now gives that the 
compositum $K_k H_0$ is contained in $H_{k+1}$. The theorem follows.
\end{proof}

The theory of {\it complex multiplication\/} provides us with a means of 
explicitly computing the extension $H_N/K$. This theory is usually only
developed for {\it maximal\/} orders, but it generalizes to nonmaximal
orders without too much difficulty. Indeed, by \cite[Thm. 11.1]{Cox}
we know that
$$
H_N = K[x]/(f_N(x)),
$$
with $f_N \in \Z[x]$ the minimal polynomial of the $j$-invariant of the complex
elliptic curve $\C/\O_N$. There are various algorithms to compute $f_N$;
we refer to \cite{ants8} and the references therein for an overview. However,
since the proven upper bound $\widetilde O(|\disc(\O_N)^2|)$ (see e.g.
\cite[Section 5]{ants8}) on the bit size of $f_N$
is believed to be the actual size of $f_N$, these algorithms are inherently
exponential. We will give various practical improvements in Section \ref{shim} to
this basic approach.  \meshing

\section{Selecting the right subfield}\label{subfield}

As before, let $K$ be a fixed imaginary quadratic field in which $3$ is ramified. We
have seen that the $k$-th layer $K_k$ of the anticyclotomic $\Z_3$-extension of $K$
is contained in the ring class field $H_{k+1}$. In this section we explain a method
to compute $K_k$ as a subfield of $H_{k+1}$. To keep the sizes of the
generating polynomials small, the examples given in this section already
use the algorithmic improvements explained in Section \ref{shim}. Magma
code to compute the examples is available at \cite{tobefilledin}.

We first assume that $K$ has trivial 3-Hilbert class field. In this case, we 
have
$$
[H_{k}:K_k] = \#\Pic(\OK) \quad \textrm{for\ } D \equiv 3 \bmod 9
$$
and $K_k$ is the unique subfield of $H_k$ that has degree $3^k$ over $K$. For 
$K = \Q(\sqrt{-3})$,  $K_k$ is the unique subfield of degree $3^k$ of $H_{k+1}$. 
For other $D \equiv -3 \bmod 9$, we proceed as follows. As in
the discussion preceding Theorem \ref{expbound}, we let $\alpha_{k+1}\in\OK$ be
locally congruent to $\zeta_3$ modulo $3^{k+1}$.
The fixed field $H_{k+1}'$ of the automorphism $\artin{\alpha_{k+1}}{H_{k+1}/H_0}$ 
has a unique subfield of degree $3^k$ over $K$; this field is the field $K_k$ that
we are after.

\begin{example}
The field $K = \Q(\sqrt{-21})$ has class group isomorphic to $(\Z/2\Z)^2$. The 
index 4 subfield of the
ring class field $H_1$ is generated by a root of $x^3-6x-12$, but it is {\it not\/}
part of the anticyclotomic $\Z_3$-extension. 

The index 4 subfield $\widetilde K_1$ 
of the ring class field $H_2$ is obtained by adjoining a root of
$$
x^9 + 12x^6 + 81x^5 + 144x^4 + 30x^3 - 324x^2 - 504x - 336
$$
to $K$. The Galois group $\widetilde K_1/K$ is isomorphic to $(\Z/3\Z)^2$. 
To obtain the first layer, we compute 
that $\alpha_2 = 1+\sqrt{-21}$ is locally congruent to $\zeta_3$ modulo 9.
We take the fixed field of the Artin symbol corresponding
to $\alpha_2$. We find that $K_1$ is generated by a root of
$$
x^3+9x-12
$$
over $K$.
\end{example}

For the general case, we let $H_{0,3}$ be the 3-Hilbert class field of $K$. 
The extension $H_\infty/H_0$ naturally defines a $\Z_3$-extension 
$H_{\infty,3}/H_{0,3}$. The sequence
\begin{equation}\label{splitseq}
1 \rightarrow \Gal(H_{\infty,3}/H_{0,3}) \rightarrow \Gal(H_{\infty,3}/K) \rightarrow \Gal(H_{0,3}/K)
\rightarrow 1
\end{equation}
need not split in general. If it does split, then $H_{0,3}$ is
{\it not\/} contained in the anticyclotomic $\Z_3$-extension and finding the layers
proceeds as before. Determining whether the sequence splits is often easy.
In Section \ref{mirror}, we will give a simple criterion (Theorem \ref{contain})
under which $H_{0,3}$ is not contained in the anticyclotomic $\Z_3$-extension. 
Furthermore, the following examples show that it is computationally very
easy to determine if $H_{0,3}$ lies in the anticyclotomic $\Z_3$-extension
or not.

\begin{example}\label{exref}
Fix $K = \Q(\sqrt{-87})$. The class group of $\OK$ is cyclic of order 6. The 
order $\O_1$ of index 3 has cyclic Picard group of order 18. We may
replace $H_{\infty,3}$ with $H_{1,3}$ in Sequence (\ref{splitseq}) to 
obtain the nonsplit sequence
$$
1 \rightarrow \Z/3\Z \rightarrow \Z/9\Z \rightarrow \Z/3\Z \rightarrow 1.
$$
Hence, the $3$-part of the Hilbert class field of $K$ is the first
layer of the anticyclotomic $\Z_3$-extension. Explicitly, we have 
$$
K_1 = K[x]/(x^3-x^2+2x+1).
$$
The index 2 subfield of the ring class field for $\O_1$ gives the {\it second\/}
layer of the anticyclotomic $\Z_3$ extension. It is generated by a root of
$$
x^9 + 3x^8 + 6x^7 + 14x^6 + 9x^5 + 21x^4 + 6x^3 + 12x^2 + 3.
$$

For $K = \Q(\sqrt{-771})$ we obtain the split sequence
$$
1 \rightarrow \Z/3\Z \rightarrow \Z/3\Z \times \Z/3\Z \rightarrow \Z/3\Z \rightarrow 1
$$
and the Hilbert class field is not contained in the anticyclotomic $\Z_3$-extension.
\end{example}

If the 3-part of $\Pic(\OK)$ is different from $\Z/3\Z$, the situation is slightly
more involved. In the remainder of this section we explain how to 
split the 3-part $\Pic(\OK)_3$ into a part ``inside'' and a part ``outside''
of the anticyclotomic $\Z_3$-extension.

We let $\Smax\subseteq \Pic(\OK)_3$ be the largest subgroup (with respect to
inclusion) for which the sequence
\begin{equation*}%\label{splitseqsub}
1 \rightarrow \Gal(H_{\infty,3}/H_{0,3}) \rightarrow \Gal(H_{\infty,3}/H_{0,3}^{\Smax}) \rightarrow \Smax
\rightarrow 1
\end{equation*}
splits. Here, $H_{0,3}^{\Smax}$ is the fixed field of $H_{0,3}$ for $\Smax$.
This fixed field is the
largest subfield of $H_{0,3}$ that is contained in the anticyclotomic $\Z_3$-extension.

For ease of notation, we restrict to the case $D \equiv 3 \bmod 9$
so $H_{\infty,3}$ is the inverse limit of the 3-parts $\Pic(\O_N)_3$ of the
ring class field for $\O_N$.
%(In the other case, the groups are quotients
%of index 3 of $\Pic(\O_N)_3$.)
Let $\langle \gp \rangle \subset \Pic(\OK)_3$
be a subgroup of 3-power order with $\gp$ coprime to 3. The
ideal $\gp \cap \O_N$ is an invertible $\O_N$-ideal whose class in $\Pic(\O_N)_3$
maps to the class of $\gp$ in $\Pic(\OK)_3$. The other preimages are
$(\gp \cap \O_N) I$, with $I$ ranging over the kernel
of $\Pic(\O_N)_3 \rightarrow \Pic(\OK)_3$.
We compute the order inside $\Pic(\O_N)_3$ for each of the preimages
of $\gp$, and check if one of those equals the order
of $[\gp] \in \Pic(\OK)_3$. If it does, the sequence
\begin{equation*}
1 \rightarrow \Gal(H_{N,3}/H_{0,3}) \rightarrow \Gal(H_{N,3}/H_{0,3}^{\langle\gp\rangle})  \rightarrow \langle \gp\rangle \rightarrow 1
\end{equation*}
splits; otherwise it does not.

\begin{example}
% h=3 fields with 3 ramified: none
% h=6 fields with 3 ramified: [ 87, 339, 411, 771, 843, 1059, 1203, 1347, 1563, 2283, 2787 ]
% 6690, 6789
Fix $K = \Q(\sqrt{-6789})$. We have $\Pic(\OK) \cong \Z/2\Z \times \Z/6\Z \times \Z/6\Z$
and $\Pic(\O_1) \cong \Z/2\Z \times \Z/6\Z \times \Z/18\Z$. The kernel of the map
$\Pic(\O_1) \rightarrow \Pic(\OK)$ is generated by the class of the $\O_1$-ideal
$$
I = \O_1(9, 3\sqrt{-6789}-3)
$$
of norm 9. There are four subgroups of $\Pic(\OK)$ of index 3; elements of
order 6 in these subgroups are ideals of norm $5$, $7$, $11$ and $97$, respectively.
The ideal $\gp_5$ has order $6$ in $\Pic(\OK)$, but
$I^k (\gp\cap\O_1)$ has order $18$ for $k=0,1,2$. Likewise for $\gp_7$
and $\gp_{97}$. On the other hand, the ideal $(\gp_{11} \cap \O_1)$ has order 6.

The fixed field of $H_0$ under the subgroup
of $\Pic(\OK)$ generated by $\gp_{11}$ (of order 6), $\gp_{5}^3$
(of order 2) and $\gp_2 = \O_1 (2,3\sqrt{-6789}+1)$ (of order 2) equals
the first layer $K_1$ of the anticyclotomic $\Z_3$-extension of $K$. To find a 
generating polynomial, we compute the maximal real subfield of $H_0$ using
CM theory and compute its 4 degree 3 subfields $L_1,\ldots,L_4$. 
%Even though
%we can use the Weber $\gf$-function for the CM computation (see Section \ref{shim}), the resulting 
%polynomials for $L_1,\ldots,L_4$ are big. The degree of $L_i$ is small enough
%that we can quickly apply lattice basis reduction to obtain small generating
%polynomials. 
We now check whether the Artin symbol corresponding to $\gp_{11}$
acts trivially on $KL_i/K$. As expected, it does so for a unique field. In the
end, we find that a root of 
$$
x^3-x^2+8x+124
$$
generates $K_1/K$.

\end{example}

\section{Practical improvements}\label{shim}

The techniques described yield generating polynomials
that are much larger than necessary. The reason for this is that the $j$-function is
{\it not\/} the right function to use from a practical perspective to compute 
a ring class field. For every given discriminant, a suitably chosen {\it class
invariant\/} can be used instead. The use of class invariants dates back to Weber's
days, and modern treatments rely on {\it Shimura reciprocity\/}. We refer 
to \cite{psh,schertz} for good descriptions and give the main result that we need.

\begin{theorem} Let $D<0$ be a discriminant, and choose a quadratic
generator $\tau$ for the imaginary order of discriminant $D$. Then there 
exists a modular function $f$ of level $n>1$ such that $f(\tau)$ generates the ring
class field; furthermore, the minimal polynomial of $f(\tau)$ over $K = \Q(\sqrt{D})$
can be explicitly computed in time $\widetilde O(|D|)$.
\end{theorem}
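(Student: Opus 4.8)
The plan is to realize $f(\tau)$ as a \emph{class invariant} and to control both its existence and its conjugates through Shimura reciprocity. Write $H$ for the ring class field of the order $\O_D=\Z[\tau]$ of discriminant $D$; by \cite[Thm.\ 11.1]{Cox} the singular value $j(\tau)$ already generates $H$ over $K$, and $[H:K]=\#\Pic(\O_D)=\widetilde O(|D|^{1/2})$. First I would set up the field $\mathcal{F}_n$ of modular functions of level $n$ whose $q$-expansions have coefficients in $\Q(\zeta_n)$; it is Galois over $\mathcal{F}_1=\Q(j)$ with group $\mathrm{GL}_2(\Z/n\Z)/\{\pm 1\}$, and these groups assemble into an action of $\mathrm{GL}_2(\widehat{\Z})$ on $\mathcal{F}=\bigcup_n \mathcal{F}_n$. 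Multiplication on the $\Z$-basis $(\tau,1)$ of $\O_D$ furnishes a map $g_\tau\colon (\O_D/n\O_D)^*\to \mathrm{GL}_2(\Z/n\Z)$, and Shimura's reciprocity law (see \cite{psh,schertz}) describes the Galois action on singular values: for $f\in\mathcal{F}_n$ finite at $\tau$ and an idele $s\in\widehat{\O_D}^{\,*}$ with Artin symbol $\sigma_s$, one has $f(\tau)^{\sigma_s}=f^{\,g_\tau(s)^{-1}}(\tau)$ once the standard conventions are fixed.

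Second I would pin down the existence of $f$. Since $H$ corresponds under the Artin map to the idele subgroup $K^*\,\widehat{\O_D}^{\,*}$, the reciprocity law shows that $f(\tau)\in H$ exactly when $f$ is invariant under the finite group $W_\tau=g_\tau\bigl((\O_D/n\O_D)^*\bigr)\subseteq \mathrm{GL}_2(\Z/n\Z)$. The fixed field $\mathcal{F}_n^{W_\tau}$ is a one-dimensional function field containing $\mathcal{F}_1=\Q(j)$, so every one of its elements that is finite at $\tau$ specializes into $H$. As $j(\tau)$ already generates $H$ over $K$, a generic element of $\mathcal{F}_n^{W_\tau}$---concretely a suitable Weber function or double-eta quotient of some fixed level $n>1$---is finite and nonzero at $\tau$ and satisfies $K(f(\tau))=H$. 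Checking that the chosen invariant genuinely generates $H$, rather than a proper subfield, and is regular at $\tau$, is the delicate point here.

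For the computation I would use that the conjugates of $f(\tau)$ under $\Gal(H/K)\cong\Pic(\O_D)$ are the values $f^{\gamma_i}(\tau_i)$, where $\tau_i$ ranges over the $h=\widetilde O(|D|^{1/2})$ reduced CM points representing the ideal classes and each $\gamma_i\in\mathrm{GL}_2$ is supplied by reciprocity. The minimal polynomial is then
$$
P(x)=\prod_{i=1}^{h}\bigl(x-f^{\gamma_i}(\tau_i)\bigr)\in K[x].
$$
Because $f$ has bounded level and a $q$-expansion with a pole of fixed order, one obtains the uniform estimate $\log\bigl|f^{\gamma_i}(\tau_i)\bigr|=\widetilde O(|D|^{1/2})$, so the coefficients of $P$ have bit size $\widetilde O(|D|^{1/2})$ and $P$ has total size $\widetilde O(|D|)$. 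Evaluating the $h$ singular values to a combined precision of $\widetilde O(|D|)$ bits by their rapidly convergent eta/theta products, reconstructing $P$ by a subproduct tree, and rounding the complex coefficients to their exact values in $\OK$, then costs $\widetilde O(|D|)$ bit operations in total.

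The step I expect to be the main obstacle is the uniform height bound $\log\bigl|f^{\gamma_i}(\tau_i)\bigr|=\widetilde O(|D|^{1/2})$, since the conjugates attached to forms with small first coefficient $a_i$---hence small $\mathrm{Im}\,\tau_i$---dominate both the output size and the working precision. Making this precise requires the analytic bound $\sum_i a_i^{-1}=\widetilde O(1)$ over reduced forms of discriminant $D$ together with the explicit pole order of $f$; granting these, the claimed size and the $\widetilde O(|D|)$ running time follow, while the existence-and-generation argument of the second paragraph is standard but must be verified for the particular invariant used.
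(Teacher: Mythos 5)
Your sketch is correct and follows the same route as the paper, which simply defers to \cite[Thm.\ 4]{schertz} and \cite[Cor.\ 3.1]{engeeta}: those references establish exactly the class invariants (Weber functions and double eta-quotients) via the Shimura reciprocity formalism you describe, and the $\widetilde O(|D|)$ running time is the standard floating-point class-polynomial computation (cf.\ the discussion around \cite{ants8} in Section \ref{ringclassfield}). The two points you flag as delicate --- that the invariant actually generates the ring class field rather than a subfield, and the height bound via $\sum_i a_i^{-1}=\widetilde O(1)$ --- are precisely what the cited results supply, so your reconstruction matches the intended proof.
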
 
\begin{proof} We refer to \cite[Thm. 4]{schertz} and \cite[Cor. 3.1]{engeeta} for 
two classes of functions. 
\end{proof}

The size of the generating polynomial for the ring class field depends on the choice
of the function $f$ in the theorem. To compute the `reduction factor', we let
$\Psi(j,f)=0$ be the irreducible polynomial relation between $j$ and $f$ and put
$$
r(f) = \frac{\deg_f(\Psi(f,j))}{\deg_j(\Psi(f,j))} \in \Q_{>0}.
$$
As in \cite[Sec. 4]{brpsh}, we expect the logarithmic height of the coefficients
of the minimal polynomial of $f(\tau)$ to be a factor $r(f)$ smaller than
the corresponding coefficients for $j(\tau)$. By \cite[Thm. 4.1]{brpsh}, we have
$$
r(f) \leq 800/7 \approx 114.28.
$$

If $2$ splits in $\O$, then the cube of the Weber-$\gf$ can be used. This
function satisfies $(\gf^{24}-16)^3-j\gf^{24}=0$ and has reduction factor $72/3=24$.
If $2$ is inert, we can use a suitably chosen {\it double $\eta$-quotient\/}. The
exact reduction factor depends on the choice of the $\eta$-quotient; we refer to
\cite{engeeta} for details.
We can use the CM software package \cite{AECM} by Enge to compute the necessary ring class
fields. This package can select the modular function, so that only the discriminant
$D$ is required.

\begin{example}
Let $K = \Q(\sqrt{-3})$. To obtain the first nontrival layer of the 
anticyclotomic $\Z_3$-extension, we compute the ring class field for the order $\O_2$. 
If we use the $j$-function, we obtain a cubic polynomial with constant term
$$
2^{45}\cdot  3\cdot 5^9\cdot 11^3\cdot 23^3.
$$
In this case, a suitably chosen {\it double $\eta$-quotient\/} yields
a class invariant. Using the package \cite{AECM}, we obtain the polynomial
$$
x^3-12x^2-6x-1.
$$
\end{example}

We stress that by class invariants, we can only gain a {\it constant factor\/} in the
size of the coefficients, and that our method is inherently exponential in $\log |D|$.
To push the range of examples further, we can employ {\it lattice basis reduction\/}. 
Indeed, if we have computed a polynomial $f(x)$ that generates the ring class
field, we can view the order defined by $f$ as a {\it lattice\/} in Euclidean space.
If the degree and the coefficients of $f$ are not too big, we can compute a short
basis for this lattice and obtain a ``better'' polynomial.

\begin{example}
For $K = \Q(\sqrt{-3})$, the polynomial $f \in \Z[x]$ for $\O_{3}$ given by Enge's program 
has coefficients in between $-24930$ and $29559$. We view $\Z[x]/(f)$ as a lattice
and after lattice basis reduction, we obtain the
polynomial
$$
x^9 + 9x^6 + 27x^3 + 3.
$$
Using the same technique, we find the polynomial
$$
\eqalign{
x^{27} + 27x^{24} + 324x^{21} + 1980x^{18} + 5022x^{15}\cr
 - 8262x^{12} - 30348x^9 + 304236x^6 + 1365417x^3 + 3}
$$
for the third layer of the anticyclotomic $\Z_3$-extension.
\end{example}

\section{Mirror principle}\label{mirror}

In this section we give an application of the {\it mirror principle\/} that 
relates the class groups of the imaginary quadratic field $\Q(\sqrt{D})$ and 
the real quadratic field $\Q(\sqrt{-D/3})$. This allows us to prove the
following theorem that was alluded to in Example \ref{exref}.

\begin{theorem}\label{contain} Let $D \equiv 3 \bmod 9$ be a negative
discriminant, and 
assume that $3$ does not divide the class number of the real quadratic field
$\Q(\sqrt{-D/3})$. Then the $3$-Hilbert class field of $K = \Q(\sqrt{D})$ is 
contained in the anticyclotomic $\Z_3$-extension of $K$.
\end{theorem}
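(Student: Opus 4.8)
The plan is to translate the containment $H_{0,3}\subseteq\antiKt$ into the non-splitting of the exact sequence (\ref{splitseq}) and then to kill the resulting obstruction by reflection. First I would record the reduction already implicit in Section~\ref{subfield}. Since complex conjugation acts by inversion on every $\Pic(\O_N)$, the group $G=\Gal(H_{\infty,3}/K)=\varprojlim\Pic(\O_N)_3$ is abelian, and by Lemma~\ref{galgroup} (the case $D\equiv 3\bmod 9$, which is why we restrict to it here) it sits in an extension $1\to\Z_3\to G\to\Pic(\OK)_3\to 1$ with procyclic kernel $N=\Gal(H_{\infty,3}/H_{0,3})$. A short computation with abelian pro-$3$ groups then shows that $A=\Gal(H_{\infty,3}/\antiKt)$ is contained in $N$ — equivalently $H_{0,3}\subseteq\antiKt$ — if and only if $G$ is procyclic, i.e. if and only if (\ref{splitseq}) does not split over any nontrivial subgroup. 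In the language of Section~\ref{subfield} this is exactly the assertion $\Smax=\{1\}$, so it suffices to prove that the split-off subgroup $\Smax\subseteq\Pic(\OK)_3$ is trivial.

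Next I would bring in the mirror pair. Writing $D=-3m$ with $m=-D/3>0$, the fields $F=\Q(\sqrt{m})=\Q(\sqrt{-D/3})$, $K=\Q(\sqrt{-3m})$ and $\Q(\sqrt{-3})=\Q(\zeta_3)$ form the classical Scholz reflection triple inside the biquadratic field $L=F(\zeta_3)=K(\zeta_3)$. The first consequence of the mirror principle is the rank inequality: if $r$ and $s$ denote the $3$-ranks of the class groups of $F$ and $K$, then $r\le s\le r+1$. Under the hypothesis $3\nmid h(F)$ we have $r=0$, hence $s\le 1$ and $\Pic(\OK)_3$ is cyclic. This gives the necessary cyclicity; what remains is to locate the (at most one) cubic direction of $\Pic(\OK)_3$ inside the anticyclotomic tower rather than among the split-off classes.

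The heart of the argument is the refined form of reflection, which I would apply to the split-off part. A nontrivial $\Smax$ produces an everywhere-unramified cyclic cubic extension $M/K$ which, because $\sigma$ acts by $-1$ on $\Gal(M/K)$, is dihedral over $\Q$ and which, being split off, is \emph{not} the first layer of $\antiKt$. Kummer duality inside $L=K(\zeta_3)$ attaches to $M$ a cyclic cubic extension of $F$; the point is that the single ``extra'' cubic direction on the $K$-side — the one ramified at the prime above $3$ and attached to $\zeta_3\in L$ — is precisely the anticyclotomic first layer, so a genuinely split-off $M$ must reflect to an everywhere-unramified cyclic cubic extension of $F$, i.e. to a nontrivial element of $\Pic(\O_F)_3$. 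Since $3\nmid h(F)$ forces $\Pic(\O_F)_3=\{1\}$, no such $M$ exists; hence $\Smax=\{1\}$ and, by the reduction of the first paragraph, $H_{0,3}\subseteq\antiKt$. Finally the cyclicity of $\Pic(\OK)_3$ promotes this mod-$3$ statement to the full tower: once the unique cubic subextension of $H_{0,3}$ is anticyclotomic and $G$ has corank one over $\mathbf{F}_3$, the group $G$ is forced to be procyclic.

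The step I expect to be the main obstacle is the refined reflection used above: separating, among the cyclic cubic extensions of $K$ lying in $L$, the single ramified ``$\zeta_3$'' direction (which lands in the anticyclotomic tower) from those reflected from genuine unramified cubics of $F$, and verifying that the split-off classes land on the latter. Making this separation precise — equivalently, pinning down the Kummer generators and their ramification behaviour at the prime above $3$ — is where the real work sits; the rank inequality and the abelian pro-$3$ bookkeeping are routine by comparison.
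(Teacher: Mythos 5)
Your global strategy coincides with the paper's: both reduce the theorem to showing that $K$ admits no ``second'' cubic direction beyond the anticyclotomic one, and both attack that by Scholz-style reflection through $L=K(\zeta_3)=F(\zeta_3)$. Your opening reduction (the containment $H_{0,3}\subseteq\antiKt$ holds iff $\Gal(H_{\infty,3}/K)$ is procyclic iff $\Smax=\{1\}$) and your closing promotion to the full tower are correct and routine, as you say. The problem is the middle: the step you defer as ``where the real work sits'' is exactly the paper's key lemma (Lemma \ref{unique}: there is \emph{exactly one} cubic extension of $K$ unramified outside $3$ and dihedral over $\Q$), so the proposal as written has a genuine gap, and the sketch you give of how to fill it would fail. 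You characterize the single ``extra'' Kummer direction as ``the one ramified at the prime above 3 and attached to $\zeta_3\in L$'' and identify it with the anticyclotomic first layer. Both descriptions are wrong. The $\zeta_3$-direction, i.e.\ $L(\sqrt[3]{\zeta_3})=L(\zeta_9)$, is the \emph{cyclotomic} direction: since $\sigma$ inverts both $\zeta_3$ and the Kummer roots of unity, equivariance of the Kummer pairing shows that this direction is abelian over $\Q$, so it does not even lie in the dihedral eigenspace under discussion. The genuine extra direction is the one attached to the fundamental unit $\varepsilon$ of $F$ (the paper's Lemma \ref{basecase} computes $\kappa_1=\epsilon_0^{-4h(F)}$), and it is \emph{not} detected by ramification at $3$: in the only nontrivial case of the theorem, namely $3\mid h(K)$, this direction is unramified everywhere, being the cubic subfield of the Hilbert class field. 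So the separation ``split-off $\Leftrightarrow$ reflects to an unramified cubic of $F$'' cannot be established by the ramification criterion you propose.

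Here is what actually closes the gap, and why your framing makes it harder than necessary. The paper shows: any cubic $M/K$ unramified outside $3$ and dihedral over $\Q$ has a Kummer generator $\alpha$ that may be taken in $F$ (equivariance of the pairing); unramifiedness outside $3$ gives $(\alpha)=IJ^3$ with $I$ supported over $3$; the hypothesis $3\nmid h(F)$ lets one take $\alpha$ a $3$-unit; the hypothesis $D\equiv 3\bmod 9$ makes $3$ \emph{inert} in $F$, so $\alpha=\pm 3^a\varepsilon^b$; and dihedrality over $\Q$ forces $\varphi(\alpha)\equiv\alpha^{-1}$ modulo cubes, which kills the exponent $a$. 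Hence $\alpha\equiv\varepsilon^{\pm 1}$ up to cubes and the extension is unique. Uniqueness finishes the proof with no need to decide \emph{a priori} which direction is anticyclotomic: $K_1$ and the cubic subfield of $H_{0,3}$ are both cubic, unramified outside $3$, and dihedral over $\Q$, so they coincide, and then your own final argument (or the paper's compositum argument) handles the higher layers. Your version instead requires identifying the unit direction with $K_1$ in advance --- a genuinely harder task, which the paper only carries out in Section \ref{kummersec} under additional hypotheses. Note finally that your sketch never uses the inertness of $3$ in $F$; this is essential, since in the split case $D\equiv-3\bmod 9$ the primes above $3$ contribute extra admissible Kummer directions and the uniqueness statement fails in that form.
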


The proof of the theorem relies on the following lemma. The proof of this
lemma is very similar to the proof of Scholz' mirror theorem \cite{Sch}.

\begin{lemma} \label{unique} Let $D \equiv 3 \bmod 9$ be a negative
discriminant, and assume that 3 does not
divide the class number of the real quadratic field $\Q(\sqrt{-D/3})$. Then, 
there 
exists exactly one degree 3 extension of $\Q(\sqrt{D})$ that is unramified
outside 3 and dihedral over $\Q$.
\end{lemma}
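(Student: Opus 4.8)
The plan is to recast the counting problem as a Kummer/Selmer computation over $F=K(\zeta_3)=\Q(\sqrt{D},\sqrt{-3})$ and then run a reflection argument in the style of Scholz. Write $\Delta=\Gal(F/\Q)\cong(\Z/2\Z)^2$; its three nontrivial characters cut out the quadratic subfields $K$, $\Q(\zeta_3)$ and the real field $K^{*}=\Q(\sqrt{-D/3})$, and I call them $\chi_K$, $\omega$ (the mod-$3$ cyclotomic character) and $\chi_{K^{*}}=\omega\chi_K$. A degree-$3$ extension $L/K$ that is dihedral over $\Q$ becomes, after base change to $F$, a Kummer extension $F(\sqrt[3]{x})/F$ for some $x\in F^{*}/(F^{*})^{3}$, and $L/K$ is unramified outside $3$ exactly when $\mathrm{ord}_v(x)\equiv0\bmod 3$ for all $v\nmid 3$; thus $x$ lies in the $3$-Selmer group $V=\{x:\mathrm{ord}_v(x)\equiv0\ (3)\ \forall\,v\nmid3\}$. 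The dihedral requirement that the generator of $\Gal(K/\Q)$ act by inversion pins down the isotypic component: the Kummer pairing is $\Delta$-equivariant with $\Delta$ acting on $\mu_3$ through $\omega$, so the Galois character $\chi_K$ on $\Gal(L/K)$ is paired with the Selmer character $\omega\chi_K=\chi_{K^{*}}$. Dihedral cubics of $K$ unramified outside $3$ therefore correspond to lines in $V^{\chi_{K^{*}}}$, and the lemma reduces to proving $\dim_{\mathbf F_3}V^{\chi_{K^{*}}}=1$.

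To compute this dimension I would use the fundamental sequence $0\to\O_{F,S}^{*}/(\O_{F,S}^{*})^{3}\to V\to \mathrm{Cl}_S(F)[3]\to 0$, with $S$ the primes of $F$ above $3$ and $\mathrm{Cl}_S$ the $S$-class group, and pass to $\chi_{K^{*}}$-eigenspaces (harmless since $3\nmid|\Delta|$, so $\Q[\Delta]$ is semisimple). For the class-group term, reflection identifies isotypic components with class groups of subfields: because $K^{*}=F^{\langle c\rangle}$ for $c$ the complex conjugation, $\chi_{K^{*}}(c)=1$, and the $\chi_0$-component is $\mathrm{Cl}(\Q)_3=0$, one obtains $\mathrm{Cl}(F)_3^{\chi_{K^{*}}}\cong\mathrm{Cl}(K^{*})_3$. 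The hypothesis $3\nmid h(K^{*})$ makes this vanish, hence so does its quotient $\mathrm{Cl}_S(F)_3^{\chi_{K^{*}}}$. Consequently $\dim V^{\chi_{K^{*}}}=\dim\bigl(\O_{F,S}^{*}/(\O_{F,S}^{*})^{3}\bigr)^{\chi_{K^{*}}}$.

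Since the torsion $\mu(F)$ is $\omega$-isotypic and $\omega\neq\chi_{K^{*}}$, the unit term equals the $\chi_{K^{*}}$-multiplicity of $\O_{F,S}^{*}\otimes\Q$, which by Herbrand's description of the $\Delta$-module of $S$-units splits as $m_{\chi_{K^{*}}}(\O_F^{*}\otimes\Q)+m_{\chi_{K^{*}}}(\Q[S])$. The global units of the totally imaginary quartic field $F$ have rank $1$ and are supplied by the fundamental unit of the real subfield $K^{*}$, which sits exactly in the $\chi_{K^{*}}$-component; this contributes $1$. The decisive point is the local term $m_{\chi_{K^{*}}}(\Q[S])$: I would first show that the inertia group of $3$ in $F$ is $\langle c\rangle$ (as $3$ ramifies in both $K$ and $\Q(\zeta_3)$ but is unramified in $K^{*}$), and then the hypothesis $D\equiv3\bmod9$ enters, forcing $-D/3\equiv2\bmod3$, a nonresidue, so that $3$ is \emph{inert} in $K^{*}$. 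The unique prime above $3$ then has decomposition group $\Delta\not\subseteq\ker\chi_{K^{*}}$, whence $m_{\chi_{K^{*}}}(\Q[S])=0$. Combining the two contributions gives $\dim V^{\chi_{K^{*}}}=1$; in particular the dihedral cubic both exists and is unique.

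The step I expect to be the main obstacle is exactly this local analysis at $3$, the analogue of the unit/local term in Scholz's reflection. The computation of $\dim(\O_{F,S}^{*}/3)^{\chi_{K^{*}}}$ would yield $2$ rather than $1$ if $3$ were to split in $K^{*}$, so the whole lemma rests on the arithmetic fact that $D\equiv3\bmod9$ renders $3$ inert in $K^{*}$; one must pin down the inertia and decomposition groups of $3$ in the biquadratic field $F$ precisely, and verify that the fundamental unit of $K^{*}$ genuinely survives in $\O_{F,S}^{*}/(\O_{F,S}^{*})^{3}$ and lies in the $\chi_{K^{*}}$-eigenspace. The remaining ingredients—the Kummer/Selmer sequence, the reflection identification of isotypic components with class groups of subfields, and the semisimplicity of $\Q[\Delta]$—are standard.
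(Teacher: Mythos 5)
Your proposal is correct and is essentially the paper's argument: the paper carries out the same Scholz-style reflection element-by-element, using the Galois-equivariance of the Kummer pairing to push a Kummer generator $\alpha$ into the mirror field $\Q(\sqrt{-D/3})$, the hypothesis $3\nmid h(\Q(\sqrt{-D/3}))$ to make $\alpha$ a $3$-unit, and the inertness of $3$ (forced by $D\equiv 3\bmod 9$) together with the action of $\varphi$ to conclude that $\alpha$ is $\pm\varepsilon$ up to cubes. Your Selmer-eigenspace dimension count packages exactly these three inputs as $\dim V^{\chi_{K^{*}}}=1$, with the minor added benefit of delivering existence and uniqueness simultaneously rather than only pinning down the generator.
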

\begin{proof}
Let $K = \Q(\sqrt{D})$ and let $L/K$ be a degree 3 extension that is
unramified outside 3 and dihedral over $\Q$.
The field $L$ fits inside Diagram \ref{uniquediag} below. This diagram also defines 
automorphisms $\tau, \sigma$ and $\varphi$. By abuse of notation, $\tau$ denotes
both a generator of $\Gal(L/K)$ and its unique lift to $\Gal(L(\zeta_3)/V)$; likewise
for $\sigma$ and $\varphi$.
Because $L(\zeta_3)/V$ is
a Kummer extension, we can write $L(\zeta_3) = V(\sqrt[3]\alpha)$ with $\alpha \in V$.  

\begin{figure}
\renewcommand{\figurename}{Diagram}
\begin{center}
\begin{tikzcd}
& L(\zeta_3) \arrow[dl, dash] \arrow[dr, , dash, "\tau"] \\
L \arrow[dr, dash, "\tau"] & & V = K(\zeta_3) \arrow[dl, dash, "\varphi"] \arrow[dr, dash, "\sigma"]\\
& K \arrow[dr, dash, "\sigma"] & \Q(\zeta_3) \arrow[u, dash] & F =\Q(\sqrt{-D/3}) \\
& & \Q \arrow[u, dash] \arrow[ur, dash, "\varphi"] 
\end{tikzcd}
\end{center}
\caption{Diagram of fields for Lemma \ref{unique}}.\label{uniquediag}
\end{figure}

Any such $L(\zeta_3)$ will have $\varphi$ acting trivially on the 
corresponding $\tau$ as well as have $\sigma$ acting as $-1$ on $\tau$.
Our proof proceeds by showing that both the field of definition and the
norm of $\alpha$ are very restricted.

First we show that
$\alpha$ can be taken to lie in the real quadratic field $F = \Q(\sqrt{-D/3})$. 
The {\it Kummer pairing\/}
$$
\langle \alpha \rangle / \langle\alpha^3\rangle \times \langle \tau \rangle \rightarrow \mu_3
$$
is Galois equivariant, and 
since $\sigma$ acts on $\zeta_3$ as $-1$ and on $\tau$ as $-1$, we see that $\sigma$
acts as $+1$ on $\alpha\bmod (V^*)^3$. We deduce that $\sigma(\alpha) = \alpha \cdot \beta^3$ for some $\beta\in V^*$,
and hence
$$
N_{V/F}(\alpha) = \alpha \sigma(\alpha) \equiv \alpha^2 \bmod \hbox{cubes}.
$$
Since $\alpha$ and $\alpha^2$ generate the same extension, this 
shows that we may assume that $\alpha$
lies in $F$.

Since the extension $L(\zeta_3)/V$ is unramified outside $3$, we have $(\alpha) = I J^3$
for ideals $I,J$ 
with $I$ a product of primes lying over $(3) \subset \Z$. The assumption
that $3$ does not divide the class number of $F$ now implies that
we may assume that $\alpha$ is $3$-unit.
Furthermore, the assumption $D \equiv 3 \bmod 9$ implies
that $3$ is inert in $\Q(\sqrt{-D/3})$.  %, so $\alpha$ has norm $\pm 1$ or $9$.
We get that $\alpha$ is a unit times $3^a$ for some $a$. Since
$\varphi(\alpha)$ is congruent to $\alpha^{-1}$ modulo cubes, we must have 
$a\equiv 0 \bmod 3$.  Therefore, we may take
$\alpha =\pm \varepsilon$, with $\varepsilon$ a fundamental unit of $F$.  
\end{proof}

%The choice $\alpha = 3$ is impossible because $\varphi(\alpha) = \alpha^{-1}$, yet
%$\varphi(3) = 3$. We conclude that $\alpha$ is either a fundamental unit $\varepsilon$
%in $\Q(\sqrt{-D/3})$ or its square $\varepsilon^2$. 

\begin{proof}[Proof of Theorem \ref{contain}]
Since $K$ has a unique cubic extension that is unramified outside 3 and
dihedral over $\Q$, the class group of $\OK$ has 3-rank at most 1. We write
$\Pic(\OK)_3 = \Z/3^n\Z$ for some $n \geq 0$. We need to prove that the 3-Hilbert class
field $H_3(K)$ coincides with the $n$-th level $K_n$. 

Suppose that we have $H_3(K) \cap K_n = K_k$ for some $k < n$. The Galois group of the 
compositum $H_3(K) K_n$ over $K$ then has 3-rank 2. This means that there is more than
one cubic extension of $K$ contained in $H_3(K) K_n$. All these extensions are unramified
outside 3 and dihedral over $\Q$ however; contradiction. 
\end{proof}

Lemma \ref{unique} allows us to deduce a simple sufficient criterion for when the
3-parts $\Pic(\O_N)_3$ are cyclic.

\begin{theorem} \label{cyclicPic} Assume that $3$ does not divide the
class number of the real quadratic field $\Q(\sqrt{-D/3})$. 
For $D \equiv 3 \bmod 9$, the $3$-part $\Pic(\O_N)_3$ 
is cyclic for all $N \geq 0$. 
%For $D \equiv -3 \bmod 9$, we have
%$\Pic(\O_N)_3 \cong \Z/3\Z \times \Z/3^m\Z$ for some $m \geq N$.
\end{theorem}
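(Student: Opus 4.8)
The plan is to bound the 3-rank of $\Pic(\O_N)_3$ by $1$ for every $N$, which is equivalent to cyclicity since a finite abelian $3$-group is cyclic exactly when its $3$-rank is $1$. The $3$-rank of $\Pic(\O_N)_3$ counts the number of independent unramified-outside-$3$, dihedral-over-$\Q$ cubic extensions of $K$ inside the ring class field $H_N$, so the strategy is to show there is essentially only one such extension available, no matter how large $N$ is. Concretely, I would argue that every cubic subextension $L/K$ of $H_N/K$ is abelian over $K$ and, because $H_N/\Q$ is generalized dihedral (as noted in Section~\ref{ringclassfield}), is dihedral over $\Q$; moreover each such $L$ is unramified outside $3$ since $H_N$ is. Lemma~\ref{unique} then says there is \emph{exactly one} such $L$, which forces the $3$-rank of $\Gal(H_N/K)_3$, and in particular of $\Pic(\O_N)_3 \cong \Gal(H_N/K)_3$, to be at most $1$.

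The main structural input is therefore the compatibility between subfields of $H_N$ and the hypotheses of Lemma~\ref{unique}. First I would record that under $D \equiv 3 \bmod 9$ the Artin isomorphism $\Pic(\O_N) \isar \Gal(H_N/K)$ identifies $\Pic(\O_N)_3$ with $\Gal(H_{N,3}/K)$, where $H_{N,3}$ is the maximal $3$-subextension. Then, writing $\Pic(\O_N)_3$ as a product of cyclic $3$-groups, each factor of order $3^{e_i}$ contributes one degree-$3$ subextension; two independent factors would produce two distinct cubic extensions of $K$ inside $H_N$. The essential point is that all of these cubic extensions satisfy the two standing hypotheses of Lemma~\ref{unique}: they are unramified outside $3$ (inherited from $H_N/K$), and they are dihedral over $\Q$ (inherited from the generalized dihedral structure of $H_N/\Q$, since the generator of $\Gal(K/\Q)$ acts by inversion on $\Gal(H_N/K)$ and hence restricts to inversion on any $\Gal(K/\Q)$-stable cubic quotient).

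Given those two properties, uniqueness from Lemma~\ref{unique} applies verbatim under the hypothesis that $3 \nmid h(\Q(\sqrt{-D/3}))$, and I would conclude that there cannot be two distinct such cubic extensions. Hence $\Pic(\O_N)_3$ has $3$-rank at most $1$, so it is cyclic, for every $N \geq 0$; the case $N=0$ is exactly the statement already contained in the proof of Theorem~\ref{contain}, so the induction really lives in passing from $H_0$ up to $H_N$.

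The step I expect to be the main obstacle is verifying that \emph{every} cubic subextension of $H_N/K$ is genuinely dihedral over $\Q$ rather than merely abelian over $K$; one must check that the involution $\sigma \in \Gal(K/\Q)$ acts by inversion on each such cubic quotient and not trivially (which would make the extension abelian over $\Q$ and place it outside the scope of Lemma~\ref{unique}). This is where the generalized dihedral structure of $H_N/\Q$ is indispensable: since $\sigma$ acts as $-1$ on all of $\Gal(H_N/K)$, it acts as $-1$ on every quotient, and on a cyclic group of order $3$ inversion is nontrivial, so the dihedral hypothesis is automatically met. Once this compatibility is in hand, the rest is a direct appeal to Lemma~\ref{unique} and the rank-one characterization of cyclic $3$-groups.
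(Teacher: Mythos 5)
Your proof is correct, but it takes a different route from the paper's. The paper deduces the result from Theorem~\ref{contain}: it observes that the exact sequence $1 \to (\OK/3^N\OK)^*/\image(\OK^*)(\Z/3^N\Z)^* \to \Pic(\O_N)_3 \to \Pic(\OK)_3 \to 1$ cannot split, and then concludes cyclicity of the middle term from cyclicity of the two ends. You instead bypass Theorem~\ref{contain} altogether and apply Lemma~\ref{unique} directly as a rank bound: if $\Pic(\O_N)_3 \cong \Gal(H_{N,3}/K)$ had $3$-rank at least $2$, Galois theory would produce at least four distinct cubic subextensions of $H_N/K$, each unramified outside $3$ and each dihedral over $\Q$ because the involution acts by inversion on all of $\Gal(H_N/K)$ and hence nontrivially on every order-$3$ quotient --- contradicting the uniqueness in Lemma~\ref{unique}. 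Your checklist of the two hypotheses (ramification and dihedrality of every cubic quotient) is exactly the point that needs verifying, and you verify it correctly. Comparing the two: the paper's argument is shorter once Theorem~\ref{contain} is in hand, but its final inference is delicate --- a non-split extension of a cyclic $3$-group by a cyclic $3$-group need not be cyclic (the extension class must generate the relevant $\mathrm{Ext}$ group, not merely be nonzero), so the non-splitting statement alone does not finish the job. Your rank-counting argument is immune to this issue, since ``$3$-rank $\le 1$'' is literally equivalent to cyclicity for a finite abelian $3$-group; in effect it supplies the precise justification that the paper's last sentence is gesturing at, at the cost of re-invoking Lemma~\ref{unique} for all of $H_N$ rather than only for $H_0$ and the anticyclotomic layers.
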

\begin{proof}
By Theorem \ref{contain}, the sequence 
$$
1 \rightarrow (\OK/3^N\OK)^*/\image(\OK^*)(\Z/3^N\Z)^* \rightarrow \Pic(\O_N)_3 \rightarrow \Pic(\OK)_3 \rightarrow 1
$$
does not split for any $N$. Since the first and last term are cyclic, this means that
the middle term is cyclic.
\end{proof}

%To prove the case $D \equiv -3 \bmod 9$, we note that 
%the prime 3 now {\it splits\/} in the real quadratic field $\Q(\sqrt{-D/3})$. 
%With the notation of the proof of Lemma \ref{unique}, there are {\it two\/} independent
%choices for $\alpha$. One is again a power of a fundamental unit; the other is 
%obtained as follows. Let $\gp_3$ be a prime over $3$ in $\Q(\sqrt{-D/3})$ and 
%say that $\gp_3^t$ is principal, with $ 3 \nmid t$, and 
%generated by $\beta$. Then, we can take $\alpha = \beta/\overline \beta$ as 
%the other independent choice for the Kummer extension $V(\alpha)/V$. 

%
%\begin{lemma} Let $D \equiv -3 \bmod 9$ and put $K = \Q(\sqrt{D})$. Assume 
%that $3$ does not divide the class
%number of $\Q(\sqrt{-D/3})$. Then the 3-part of the Picard group of the order
%$\O_N$ of index $3^N$ inside $\OK$ is isomorphic to
%$$
%\Z/3\Z \times \Z/3^m\Z
%$$
%for some $m \geq N$.
%\end{lemma}
%\begin{proof} The proof is analogous to the proof of Corollary \ref{cyclicPic}.
%\end{proof}

\section{Generators via Kummer theory}\label{kummersec}

In computational class field theory, the `standard' way to compute an abelian
extension of prescribed conductor of a number field $K$ depends on whether $K$
has the appropriate roots of unity. If it does, we can use Kummer theory. If it
does not, we adjoin the right root of unity $\zeta_n$ to $K$ and compute the
right abelian extension of $K(\zeta_n)$ first. Afterwards, we `descend' down 
to $K$. We refer to \cite{cohenpsh} for a detailed description. 

If $K$ is imaginary quadratic, we can use complex multiplication techniques
instead and bypass the general method. This is the technique we used in
Section~\ref{ringclassfield}. 
However, we can make the Kummer theory approach very explicit in our setting. As
before, $K = \Q(\sqrt{D})$ is an imaginary quadratic field in which $3$ ramifies.
Throughout this section, we assume 3 does not divide the class number
of the real quadratic field $F = \Q(\sqrt{-D/3})$; we also assume that $3$
remains inert in $F$. This last restriction is essential in Lemma \ref{gpring}; the
split case appears to be much harder.

\begin{theorem}\label{kummerthm}
Assume that $3$ ramifies in $K=\Q(\sqrt{D})$ and that $3$ is inert in
$F = \Q(\sqrt{-D/3})$. If, furthermore, 3 does not divide the class
number of $F$, then the expression for $\kappa_n$ given in Definition \ref{kappagen} 
gives a Kummer generator for $K_n(\zeta_{3^n})/K(\zeta_{3^n})$ for $n \geq 1$.
\end{theorem}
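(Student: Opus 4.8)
The plan is to reproduce the descent of Lemma~\ref{unique} one level higher, over $K(\zeta_{3^n})$ and modulo $3^n$-th powers in place of cubes. First I would verify that Kummer theory applies at all. Since $\zeta_{3^n}\in K(\zeta_{3^n})$, it is enough to see that $K_n(\zeta_{3^n})/K(\zeta_{3^n})$ is cyclic of degree $3^n$. The introduction records that the anticyclotomic extension $\antiKt$ and the cyclotomic $\Z_3$-extension are linearly disjoint over $K$; combined with $K\cap\Q(\zeta_{3^n})=\Q$ for $D\neq-3$, a degree count gives $K_n\cap K(\zeta_{3^n})=K$, whence $[K_n(\zeta_{3^n}):K(\zeta_{3^n})]=[K_n:K]=3^n$. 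Thus $K_n(\zeta_{3^n})=K(\zeta_{3^n})\bigl(\sqrt[3^n]{\kappa}\bigr)$ for some $\kappa$, and what must be shown is that the element $\kappa_n$ of Definition~\ref{kappagen} represents the correct class in $K(\zeta_{3^n})^*/(K(\zeta_{3^n})^*)^{3^n}$: one of exact order $3^n$ whose associated degree-$3^n$ extension descends to the dihedral layer $K_n$.

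Next I would constrain this class by Galois equivariance, exactly as in Lemma~\ref{unique}. Lift the two involutions $\sigma,\varphi$ of Diagram~\ref{uniquediag} to $\Gal(K(\zeta_{3^n})/\Q)$, so that $\sigma$ inverts $\zeta_{3^n}$ and fixes $F=\Q(\sqrt{-D/3})$, while $\varphi$ fixes $\zeta_{3^n}$ and induces the nontrivial automorphism of $F/\Q$. Equivariance of the $\mu_{3^n}$-valued Kummer pairing forces $\sigma$ to fix $[\kappa_n]$ and $\varphi$ to invert it. The first condition, through $N_{K(\zeta_{3^n})/M}(\kappa_n)=\kappa_n\,\sigma(\kappa_n)\equiv\kappa_n^2$, lets me descend $\kappa_n$ into the fixed field $M=K(\zeta_{3^n})^{\sigma}$, which is totally real and contains $F$ (and equals $F$ when $n=1$, recovering Lemma~\ref{unique}).

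Having localized $\kappa_n$ to $M$, I would then constrain its divisor. Because $K_n(\zeta_{3^n})/K(\zeta_{3^n})$ is unramified outside $3$, the ideal $(\kappa_n)$ is the product of a $3$-part and a $3^n$-th power; the hypothesis that $3$ does not divide the class number of $F$ clears the $3^n$-th power and lets me take $\kappa_n$ to be a $3$-unit, and inertness of $3$ in $F$ together with the relation $\varphi(\kappa_n)\equiv\kappa_n^{-1}$ forces the $3$-valuation to be divisible by $3^n$. This is exactly the place where the inertness hypothesis is indispensable, as anticipated for Lemma~\ref{gpring}. What remains, up to $3^n$-th powers, is a representative built from the fundamental unit $\varepsilon$ of $F$, and I would check that the explicit expression of Definition~\ref{kappagen} is precisely this representative, that it is not a cube in $K(\zeta_{3^n})$ (so that $\sqrt[3^n]{\kappa_n}$ has degree exactly $3^n$), and that the resulting extension is unramified outside $3$ and dihedral over $\Q$. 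Uniqueness of such a tower---guaranteed layer by layer by the argument behind Theorem~\ref{contain}---then identifies $K(\zeta_{3^n})\bigl(\sqrt[3^n]{\kappa_n}\bigr)$ with $K_n(\zeta_{3^n})$.

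The main obstacle is the upgrade from ``modulo cubes'' to ``modulo $3^n$-th powers''. For the single cubic layer of Lemma~\ref{unique} a sign and an eigenspace computation suffice, whereas here I must certify that $[\kappa_n]$ has exact order $3^n$ and that $\sigma$, $\varphi$ act by genuine inversion and fixing in $K(\zeta_{3^n})^*/(K(\zeta_{3^n})^*)^{3^n}$, not merely modulo lower powers. Controlling this comes down to a local analysis at the unique prime above $3$ in $K(\zeta_{3^n})$---showing the local Kummer symbol is nondegenerate and the ramification is of the expected tame type---which is the technical content to be isolated in Lemma~\ref{gpring}. The global hypotheses that $3$ is inert in $F$ and does not divide its class number are precisely what make that local computation come out cleanly.
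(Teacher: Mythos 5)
Your soft reductions are on target and match the paper's: Kummer theory applies, and running the Galois-equivariance argument of Lemma~\ref{unique} one level up lets you take $\kappa_n$ to be a $3$-unit in the real field $F_n=K(\zeta_{3^n})^+$ that is inverted by the conjugation over $T_n$ (the paper needs Lemma~\ref{coprime}, that $h(F_n)$ is prime to $3$ --- not merely $3\nmid h(F)$ --- to clear the ideal class at this step). The genuine gap comes immediately afterwards. For $n\ge 2$ the group $E_n^-$ of such $3$-units has $\Z$-rank $3^{n-1}$, so the constraints you have imposed still leave a lattice of candidates of rank $3^{n-1}$ modulo $3^n$-th powers; your claim that ``what remains, up to $3^n$-th powers, is a representative built from the fundamental unit $\varepsilon$ of $F$'' is false beyond $n=1$. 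Indeed $\varepsilon$ is fixed by $\tau$, so $\varepsilon^{\tau-4}=\varepsilon^{-3}$ is not a $3^n$-th power in general, and by Lemma~\ref{Kummer} the extension $K(\zeta_{3^n})(\varepsilon^{1/3^n})/K$ is then not even abelian. The selection principle you are missing is precisely that abelian-descent criterion: writing $\kappa_n=\prod_j \tau^j(\xi_n)^{c_j}$ --- which is only legitimate once one knows the $\Gal(F_n/F)$-conjugates of $\xi_n$ generate a subgroup of $E_n^-$ of index prime to $3$ --- the condition that $\kappa_n^{\tau-4}$ be a $3^n$-th power forces $c_{j-1}\equiv 4c_j\pmod{3^n}$, pinning $\kappa_n$ down up to an overall exponent. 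The element of Definition~\ref{kappagen} is then shown to satisfy $\kappa_n^{\tau-4}=\beta_n^{3^n}$ by an explicit telescoping computation with the operators $D_i(\tau)$ and $b_i(\tau-1)-1$, and Lemma~\ref{final} shows it is not a cube. None of this appears in your plan, and it is the core of the theorem.

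You also misidentify the content and role of Lemma~\ref{gpring}. It is not a local nondegeneracy statement about Kummer symbols at the prime above $3$; it is a global index computation in the style of cyclotomic units: the conjugates $\sigma_j(\xi_n)$ are independent and generate a subgroup of $E_n^-$ of index prime to $3$, proved by evaluating the group determinant $R_{\xi_n}$ as a product of $L(1,\overline{\psi\chi})$ values and comparing with $\det(R_n^-)$ via the analytic class number formula, yielding $\det(R_n^-)\approx h(F_n)R_{\xi_n}/h(T_n)$. The hypothesis that $3$ is inert in $F$ enters there as $1-\chi(3)=2\not\equiv 0\pmod 3$ in the trivial-character factor (if $3$ split, this factor would vanish and the whole construction would collapse), and the class-number hypotheses enter through $3\nmid h(F_n)h(T_n)$. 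So while the skeleton of your reduction to $E_n^-$ is correct, the argument that singles out the particular product of conjugates of the $\xi_i$ in Definition~\ref{kappagen} --- the part the theorem actually asserts --- is absent.
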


Once we have computed $\kappa_n$, we can use the technique from
\cite[pp.\ 514--515]{cohenpsh} to descend from $K_n(\zeta_{3^n})/K(\zeta_{3^n})$ down to $K_n/K$.

Diagram \ref{kummerdiag} defines the various fields we will work in and explains the 
inclusion relations between them. In this diagram, the $+$ notation indicates the
maximal real subfield. We write $d = -D/3$, so that $F=\Q$
for $D=-3$ and $F$ is real quadratic otherwise. If $F$ is quadratic,
we let $\chi$ be the associated quadratic character of conductor $d$. 

All the base fields we
consider are subfields of $L = \Q(\zeta_{3^nd})$; we identify $\Gal(L/\Q)$ with 
$(\Z/3^nd\Z)^*$ and for an integer $b$ with $\gcd(b,3d) = 1$, we let
$\sigma_b$ be the automorphism satisfying $\sigma_b(\zeta_{3^nd}) =
\zeta_{3^nd}^b$. For $d\not = 1$, we fix an integer $c \equiv -1 \bmod 3^n$
with $\chi(c) = -1$; we identify $\Gal(F_n/T_n) \cong \Gal(F/\Q) \cong
\langle \sigma_c \rangle$ in this case. (For $d=1$, all statements about
$\sigma_c$ play no role and should be ignored.)

\begin{lemma}\label{coprime} The class number of $F_n$ is coprime to 3.
\end{lemma}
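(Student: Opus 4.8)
The plan is to recognize the fields $F_n$ as layers of the cyclotomic $\Z_3$-extension of $F$ and then to run the standard Iwasawa-theoretic descent. By the identifications in Diagram~\ref{kummerdiag} we have $T_n = \Q(\zeta_{3^n})^+$ and $F_n = F\cdot T_n$, so that $\bigcup_{n\geq 1}F_n = F\cdot\Q_\infty$ is exactly the cyclotomic $\Z_3$-extension $F_\infty/F$, where $\Q_\infty/\Q$ denotes the cyclotomic $\Z_3$-extension of $\Q$; the individual $F_n$ are its finite layers. Writing $A_m$ for the $3$-Sylow subgroup of the ideal class group of the $m$-th layer of $F_\infty/F$, the assertion of the lemma is equivalent to $A_m=0$ for all $m$, and I would in fact prove the stronger statement that the whole Iwasawa module $X=\varprojlim_m A_m$ vanishes.

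The essential input is the ramification of $3$ in $F_\infty/F$. Since we assume throughout this section that $3$ is inert in $F$, there is a unique prime $\gp$ of $F$ above $3$. A cyclotomic $\Z_3$-extension is unramified outside $3$, and because $3$ is unramified in $F/\Q$ the prime $\gp$ inherits the total ramification of $3$ in $\Q_\infty/\Q$; hence $\gp$ is totally ramified in $F_\infty/F$ and is its only ramified prime. For the degenerate case $D=-3$, where $d=1$ and $F=\Q$, one has instead $F_n=T_n=\Q(\zeta_{3^n})^+$ and the base field is simply $\Q$, in which $3$ is again totally and uniquely ramified along $\Q_\infty/\Q$.

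With a single totally ramified prime the descent is automatic. Setting $\Gamma=\Gal(F_\infty/F)=\langle\gamma\rangle$ and $T=\gamma-1$, the module $X$ is finitely generated over the local ring $\Lambda=\Z_3[[\Gamma]]\cong\Z_3[[T]]$, and the theory of $\Z_3$-extensions with one totally ramified prime (see \cite[Ch.~13]{lcw}) identifies the bottom layer, via the natural surjection $X\twoheadrightarrow A_0$, with the $\Gamma$-coinvariants $X_\Gamma=X/TX$. Nakayama's lemma then gives the implication $X_\Gamma=0\Rightarrow X=0$, whence $A_m=0$ for every $m$. Finally, the hypothesis that $3$ does not divide $h(F)$ (respectively $h(\Q)=1$ when $D=-3$) is precisely the statement $A_0=0$, so $X=0$ and the class number of each $F_n$ is coprime to $3$. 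I expect the only real point needing care to be the verification that $\gp$ is simultaneously the unique ramified prime and totally ramified in $F_\infty/F$; once this is secured, the vanishing of $X$---and hence of all $A_m$---is a formal consequence of Nakayama's lemma, so there is no deeper obstacle.
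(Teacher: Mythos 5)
Your proof is correct and follows essentially the same route as the paper: the paper observes that $3$ is inert in $F$, so the unique prime above $3$ is the only ramified prime in $F_n/F$ and is totally ramified, and then cites \cite[Thm.~10.4]{lcw} --- which is exactly the statement you reprove via the identification $A_0\cong X_\Gamma$ and Nakayama's lemma. The only difference is that you unpack the cited theorem's proof rather than invoking it.
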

\begin{proof} By assumption, $3$ remains inert in $F/\Q$. As
the extension $F_n/F$ has only one ramified prime and is totally
ramified, the lemma follows from \cite[Thm.~10.4]{lcw}.
\end{proof}

%\begin{center}
%\begin{tikzcd}
%&K_n(\zeta_{3^n}) \arrow[d,dash] \\
%&K(\zeta_{3^n}) \arrow[d,dash] \arrow[dr,dash] & L = \Q(\zeta_{3^n d }) \arrow[d,dash] \\
%K_n \arrow[ruu,dash]& \qquad F_n = K(\zeta_{3^n})^+ \arrow[dd,dash] & \Q(\zeta_{3^n}) \arrow[d,dash] \\
%& & T_n = \Q(\zeta_{3^n})^+ \arrow[ddl,dash] \arrow[ul, dash, "\langle \sigma_c\rangle "] \\
%K=\Q(\sqrt{D}) \arrow[dr,dash] \arrow[uu,dash] \arrow[ruuu, dash] & F = \Q(\sqrt{d})\arrow[d,dash, "\langle \sigma_c %\rangle"] \\
%& \Q
%\end{tikzcd}
%\end{center}

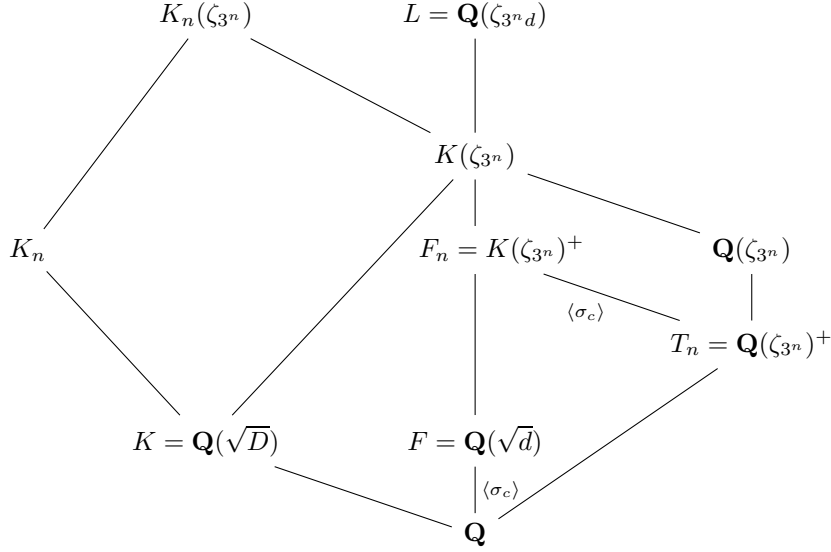
\begin{figure}
\renewcommand{\figurename}{Diagram}
\begin{center}
\begin{tikzcd}
&K_n(\zeta_{3^n}) \arrow[rdd,dash] & L=\Q(\zeta_{3^n d}) \arrow[dd,dash]\\
\\
&&K(\zeta_{3^n}) \arrow[d,dash] \arrow[dr,dash] \\
K_n \arrow[ruuu,dash]&& \qquad F_n = K(\zeta_{3^n})^+ \arrow[dd,dash] & \Q(\zeta_{3^n}) \arrow[d,dash] \\
&& & T_n = \Q(\zeta_{3^n})^+ \arrow[ddl,dash] \arrow[ul, dash, "\langle \sigma_c\rangle "] \\
&K=\Q(\sqrt{D}) \arrow[dr,dash] \arrow[luu,dash] \arrow[ruuu, dash] & F = \Q(\sqrt{d})\arrow[d,dash, "\langle \sigma_c \rangle"] \\
&& \Q
\end{tikzcd}
\end{center}
\caption{Diagram of fields for Section \ref{kummersec}}.\label{kummerdiag}
\end{figure}

The techniques of the proof of Lemma \ref{unique} shows that we may assume that the desired
element $\kappa_n$ lies in $F_n$. Furthermore, since the class number
of $F_n$ is coprime to $3$, this proof also shows that $\kappa_n$ is a
3-unit in $F_n$.

%Since $K_n(\zeta_{3^n})/K(\zeta_{3^n})$
%is unramifed at primes not dividing $(3)$, we have
%$$
%(\kappa_n) = u_n \delta_n \in F_n,
%$$
%with $u_n$ a $3$-unit, and the ideal $(\delta_n)$ is a $3^n$-th power. From
%Lemma \ref{coprime}, we derive that $\delta_n$ is, up to units, a $3^n$-th
%power. Hence, we may take $\delta_n = 1$ and the required element
%$\kappa_n$ is a 3-unit in $F_n$.

Furthermore, we claim that we may assume 
that $\sigma_c$ inverts $\kappa_n$. To see this, note that
$K(\zeta_{3^n d})/K$ is disjoint from $K_{\infty}/K$,  and $\sigma_c$ therefore 
acts trivially on $\Gal(K_n(\zeta_{3^n})/K(\zeta_{3^n}))$. 
It also acts by inversion on $\zeta_{3^n}$. Therefore, the Kummer pairing
tells us that $\sigma_c$ acts by inversion on $\kappa_n$ 
modulo $3^n$-th powers, i.e.,
we have $\kappa_n^{\sigma_c}=\kappa_n^{-1}\gamma^{3^n}$ for some $\gamma$. But 
then $\kappa_n^{1-\sigma_c} = \kappa_n^2\gamma^{-3^n}$ generates the same 
extension and is inverted by $\sigma_c$ since $\sigma_c^2=1$ 
on $K(\zeta_{3^n})$. 

Let $E_n$ be the group of 3-units of $F_n$. Let $E_n^-$ denote the subgroup consisting 
of elements that are inverted by $\sigma_c$, and $E_n^+$ denote those 
that are fixed (and hence lie in $T_n$).
We will compute a valid $\kappa_n$ as a product of suitably chosen
3-units in $E_n^-$.
For $n\ge 1$, we define
$$
\xi_n= \prod_{\substack{1\le a \le 3^nd \\  a\equiv \pm 1\text{ mod }{3^n}\\ 
(a, d)=1}} (1-\zeta_{3^{n}d}^a)^{\chi(a)}.
$$
The product is over values of $a$ representing elements of $\Gal(L/T_n)$. We
claim that $\xi_n$ lies in $F_n$. Indeed, for $\sigma_b \in \Gal(L/F_n)$ we
have $b \equiv \pm 1 \bmod 3^n$ and $\chi(b) = 1$. The computation
\begin{align*}
\sigma_b(\xi_n) &= \prod_{\substack{1\le a \le 3^nd \\  a\equiv \pm 1\text{ mod }{3^n} \\ (a, d)=1}} (1-\zeta_{3^{n}d}^{ab})^{\chi(a)}
&= \prod_{\substack{1\le a \le 3^nd \\  a\equiv \pm 1\text{ mod }{3^n} \\ (a, d)=1}} (1-\zeta_{3^{n}d}^a)^{\chi(a/b)} = \xi_n
\end{align*}
gives $\xi_n \in F_n$. 

\begin{lemma}\label{xiprop}
\begin{enumerate}
\def\theenumi{\alph{enumi}}
\item $\xi_n\in E_n^-$.
\item The norm of $\xi_n$ from $F_n$ to $F_{n-1}$ is $\xi_{n-1}$.
\end{enumerate}
\end{lemma}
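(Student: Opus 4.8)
The plan is to prove the two parts separately, using nothing beyond the explicit definition of $\xi_n$ and the elementary distribution identity $\prod_{k=0}^{2}(1-X\zeta_3^k)=1-X^3$. Throughout I write $m=3^nd$ and let $S_n=\{\,a:1\le a\le m,\ a\equiv\pm 1\bmod 3^n,\ (a,d)=1\,\}$ be the index set of the product defining $\xi_n$. For part (a), I would first observe that $a\equiv\pm1\bmod 3^n$ forces $\gcd(a,3)=1$, so every factor $1-\zeta_m^a$ has $\gcd(a,m)=1$; since $m=3^nd$ has at least two distinct prime divisors (we are in the case $d>1$, as $d=1$ is the excluded $D=-3$), each such $1-\zeta_m^a$ is a genuine cyclotomic unit of $\Q(\zeta_{m})$. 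Hence $\xi_n\in\O_L^*$, and as $\xi_n\in F_n$ it follows that $\xi_n\in\O_{F_n}^*\subseteq E_n$. To see $\xi_n\in E_n^-$, I apply $\sigma_c$ and reindex by $a\mapsto ca$: multiplication by $c$ permutes $S_n$ because $c\equiv-1\bmod 3^n$ merely interchanges the residues $\pm1\bmod 3^n$ while $\chi(c)=-1\neq 0$ forces $\gcd(c,d)=1$, and collecting character values with $\chi(c)^{-1}=\chi(c)=-1$ gives $\sigma_c(\xi_n)=\prod_{a\in S_n}(1-\zeta_m^a)^{-\chi(a)}=\xi_n^{-1}$.

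For part (b), fix $n\ge 2$, so that $F_n=FT_n$ gives $[F_n:F_{n-1}]=3$ and $\Gal(F_n/F_{n-1})\cong\Gal(T_n/T_{n-1})$. The key step is to realize the norm by lifting to $L=\Q(\zeta_m)$. Using the Chinese remainder theorem I choose integers $b_0,b_1,b_2$ with $b_j\equiv 1+3^{n-1}j\bmod 3^n$ and $b_j\equiv 1\bmod d$. Then each $\sigma_{b_j}$ lies in $\Gal(L/F_{n-1})$: it fixes $T_{n-1}$ because $b_j\equiv 1\bmod 3^{n-1}$, and fixes $F$ because $\chi(b_j)=1$; moreover the $\sigma_{b_j}$ represent the three distinct cosets of $\Gal(L/F_n)$ in $\Gal(L/F_{n-1})$. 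Since $\xi_n\in F_n$, this yields $N_{F_n/F_{n-1}}(\xi_n)=\prod_{j=0}^{2}\sigma_{b_j}(\xi_n)=\prod_{a\in S_n}\bigl[\prod_{j=0}^{2}(1-\zeta_m^{ab_j})\bigr]^{\chi(a)}$.

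The heart of the matter is the inner triple product. Because $b_j\equiv 1\bmod d$ and $b_j\equiv 1+3^{n-1}j\bmod 3^n$, the three exponents $ab_j\bmod m$ are exactly the three lifts of $a\bmod 3^{n-1}d$ to $\Z/m\Z$, so $\{\zeta_m^{ab_j}:j=0,1,2\}=\{\zeta_m^a\zeta_3^k:k=0,1,2\}$. The identity $\prod_{k=0}^{2}(1-\zeta_m^a\zeta_3^k)=1-\zeta_m^{3a}$ together with $\zeta_m^{3a}=\zeta_{3^{n-1}d}^a$ collapses the inner product, leaving $N_{F_n/F_{n-1}}(\xi_n)=\prod_{a\in S_n}(1-\zeta_{3^{n-1}d}^a)^{\chi(a)}$. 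Finally I would check that reduction modulo $3^{n-1}d$ induces a bijection $S_n\to S_{n-1}$: the two residues $\pm1\bmod 3^n$ map bijectively onto $\pm1\bmod 3^{n-1}$, and the condition $(a,d)=1$ is preserved; since each summand depends on $a$ only through $a\bmod 3^{n-1}d$, this reindexing identifies the product with $\xi_{n-1}$, as desired.

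I expect the main obstacle to be bookkeeping rather than anything conceptual: pinning down coset representatives of $\Gal(F_n/F_{n-1})$ that simultaneously respect the $3^n$-part and the $d$-part of the conductor (this is precisely what forces $b_j\equiv 1\bmod d$ together with $b_j\equiv 1+3^{n-1}j\bmod 3^n$), and verifying that reduction on the index sets is a genuine bijection $S_n\to S_{n-1}$ rather than merely a surjection with multiplicity. Once the exponents $ab_j$ are correctly identified as the three lifts of $a\bmod 3^{n-1}d$, the cyclotomic distribution relation does all of the real work.
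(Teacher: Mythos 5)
Your proof is essentially the paper's: part (a) via the reindexing $a\mapsto ca$ under $\sigma_c$ together with the observation that the factors are cyclotomic units, and part (b) via the distribution relation $\prod_{k=0}^{2}(1-X\zeta_3^k)=1-X^3$ applied factor-by-factor to the norm, followed by the bijection $S_n\to S_{n-1}$ of index sets (which the paper leaves implicit in ``the result follows''). The one flaw is your claim that $d=1$ is excluded: the section does include $D=-3$ (see Lemma \ref{basecase} and the example $K=\Q(\sqrt{-3})$, where $\xi_1=3$), and in that case the factors $1-\zeta_{3^n}^a$ are not units but only $3$-units; this still gives $\xi_n\in E_n$ because $E_n$ is by definition the group of $3$-units of $F_n$, and the statements about $\sigma_c$ are vacuous when $d=1$, so the conclusion survives with a one-line amendment rather than by excluding the case.
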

\begin{proof} For part (a), a simple computation shows that $\sigma_c(\xi_n)
= \xi_n^{-1}$.  If $d\ne 1$, every factor $1-\zeta_{3^n d}^a$ is a unit, 
so $\xi_n$ is a unit. If $d=1$, then each factor is 
a 3-unit. Therefore, $\xi_n\in E_n^-$.

For (b), we note that the Galois conjugates of $\zeta_{3^n}$ 
for $L/\Q(\zeta_{3^{n-1} d})$ are $\zeta_{3^n}\zeta_3^i$ for $i=0,1,2$.
Therefore, the norm of the factor $(1-\zeta_{3^{n} d}^a)$ is 
$$
\prod_{i=0,1,2} (1-\zeta_{3^{n} d}^a\zeta_3^{i}) = (1-\zeta_{3^{n} d}^{3a}) 
= (1-\zeta_{3^{n-1} d}^a),
$$
and the result folows.
\end{proof}

\begin{lemma}\label{gpring}
The $\sigma_j(\xi_n)$ for $\sigma_j\in\text{Gal}(F_n/F)$ are independent 3-units and generate a subgroup of $E_n^-$ of index 
prime to 3.
\end{lemma}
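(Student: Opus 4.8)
The plan is to recast the whole statement as a single assertion about the $\Z_3[G]$-module structure of the minus $3$-units, where $G=\Gal(F_n/F)$ is cyclic of order $3^{n-1}$. First I would record the two rank facts that make this possible. Because $3$ is inert in $F$, there is (as in Lemma~\ref{coprime}) a unique prime $\mathfrak{P}$ of $F_n$ above $3$, and it is fixed by $\sigma_c$; hence any $u\in E_n^-$ satisfies $v_{\mathfrak{P}}(u)=v_{\mathfrak{P}}(\sigma_c u)=v_{\mathfrak{P}}(u^{-1})$, forcing $v_{\mathfrak{P}}(u)=0$. Thus $E_n^-$ consists of honest units and equals the $\sigma_c$-minus part of $\O_{F_n}^*$. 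Writing $\Gal(F_n/\Q)=\langle\sigma_c\rangle\times G$ and using the standard isomorphism $\O_{F_n}^*\otimes\Q\cong\Q[\Gal(F_n/\Q)]\ominus\mathbf 1$ for the totally real Galois field $F_n$, the $\sigma_c$-minus part is $\cong\Q[G]$; so $E_n^-$ has rank exactly $3^{n-1}=|G|$ and $E_n^-\otimes\Q$ is the regular representation of $G$. Consequently the $3^{n-1}$ conjugates $\sigma_j(\xi_n)$ are independent and generate a subgroup of index prime to $3$ \emph{if and only if} they span $E_n^-\otimes\Z_3$ over $\Z_3$, equivalently if and only if $\xi_n$ generates $E_n^-\otimes\Z_3$ as a module over the group ring $\Z_3[G]$.

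The key step is then to prove that $E_n^-\otimes\Z_3$ is \emph{free of rank one} over $\Z_3[G]$. Since $G$ is a $3$-group, $\Z_3[G]$ is a local ring with residue field $\Z/3\Z$, and a finitely generated $\Z_3$-free module over it is free exactly when it is cohomologically trivial; together with the rank count above, freeness then forces rank one. To establish cohomological triviality I would use that $F_n/F$ is cyclic, totally ramified at the single prime $\mathfrak{P}$ above $3$ and unramified elsewhere, and that $3\nmid h_{F_m}$ for every intermediate layer $F_m$ (Lemma~\ref{coprime} applies verbatim to each $F_m$). The ambiguous-class-number (genus-theory) computation for such a tower, carried out on the $\sigma_c$-minus part, then makes the Tate cohomology $\widehat H^i(G,E_n^-\otimes\Z_3)$ vanish. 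I expect this to be the main obstacle: one must push the coprimality of the class numbers through the cohomology of the unit group while keeping track of the ramified prime $\mathfrak{P}$ and the $\sigma_c$-action, so that the vanishing is obtained on the minus eigenspace rather than merely for the full unit group.

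Granting freeness, I would identify $\xi_n$ as a generator by a descending induction with the norm, as in the cyclotomic-unit calculations of \cite{lcw}. By Nakayama, $\xi_n$ generates the free module $E_n^-\otimes\Z_3$ iff its image in the residue field $\Z/3\Z$ is nonzero. Under the identifications $E_{m-1}^-\otimes\Z_3=(E_m^-\otimes\Z_3)^{\Gal(F_m/F_{m-1})}$, cohomological triviality makes the partial norm correspond to the canonical local surjection $\Z_3[\Gal(F_m/F)]\twoheadrightarrow\Z_3[\Gal(F_{m-1}/F)]$, which is an isomorphism on residue fields; since $N_{F_m/F_{m-1}}\xi_m=\xi_{m-1}$ by Lemma~\ref{xiprop}(b), the element $\xi_m$ is a generator iff $\xi_{m-1}$ is. This reduces everything to the base case $n=1$, where $F_1=F$, $G$ is trivial, and $E_1^-\otimes\Z_3\cong\Z_3$ is generated by a fundamental unit $\varepsilon$ of $F$; here $\xi_1=\pm\varepsilon^{m}$ and one must check $3\nmid m$. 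This is the arithmetic heart of the base case: the analytic class number formula for $F$ expresses the index $[\langle\varepsilon\rangle:\langle\xi_1\rangle]$ as $h_F$ up to a power of $2$, so the hypothesis $3\nmid h_F$ yields $3\nmid m$, and tracing the equivalences back up the tower completes the proof.
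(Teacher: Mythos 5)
Your proposal takes a genuinely different route from the paper. The paper's proof is analytic: it forms the group determinant $R_{\xi_n}=\abs{\det(\log|\sigma_j\sigma_i^{-1}\xi_n|)}$, factors it over the characters $\psi$ of $\Gal(F_n/F)$, evaluates each factor $\sum_j\psi(\sigma_j)\log|\sigma_j\xi_n|$ as an explicit multiple of $L(1,\overline{\psi\chi})$, and feeds this into the analytic class number formula to get $\det(R_n^-)\approx h(F_n)R_{\xi_n}/h(T_n)$; since $R_{\xi_n}/\det(R_n^-)$ is the index of the subgroup generated by the conjugates and $3\nmid h(F_n)h(T_n)$, the lemma follows. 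You instead propose to prove that $E_n^-\otimes\Z_3$ is free of rank one over $\Z_3[G]$ and then to check, by Nakayama and the norm relation of Lemma \ref{xiprop}(b), that $\xi_n$ is a generator, reducing everything to $n=1$. Your reductions are correct: the rank count, the equivalence of the lemma with $\Z_3[G]$-generation by $\xi_n$, the generator-to-generator behaviour of the partial norms on a free module over the local ring $\Z_3[G]$, and the base case via $\xi_1=\epsilon_0^{-4h(F)}$ and $3\nmid h(F)$. The trade-off is also clear: the paper obtains the exact index $h(F_n)/h(T_n)$ up to powers of $2$ (a cyclotomic-unit index formula in the style of \cite{lcw}), while your route isolates the arithmetic input into two statements, freeness of the minus $3$-units and the $n=1$ computation. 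Note, however, that the base case is precisely Lemma \ref{basecase}, whose proof is the $\psi=1$ branch of the paper's $L$-function computation, so the class number formula is not avoided, only quarantined.

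The genuine gap is the step you yourself flag and do not carry out: the cohomological triviality of $E_n^-\otimes\Z_3$. Without it the freeness argument, and hence everything downstream, collapses. It is in fact true under your hypotheses, but it needs a real argument, roughly as follows. For the unit group $\O_{F_n}^{\times}$ the Herbrand quotient is $3^{-(n-1)}$ (all archimedean places split in the totally real tower), while Hilbert 90 identifies $H^1(G,\O_{F_n}^{\times})$ with the ambiguous principal ideals $P_{F_n}^G/P_F$, whose $3$-part is $\Z/3^{n-1}\Z$ generated by the cocycle $\sigma\mapsto\pi^{\sigma-1}$ with $(\pi)=\mathfrak{P}^m$, $3\nmid m$ (here one uses $3\nmid h(F_n)$ and $3\nmid h(F)$ and that $\mathfrak{P}$ is the unique, totally ramified, prime above $3$); consequently $\widehat H^0(G,\O_{F_n}^{\times})_3=0$. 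Adjoining the $3$-unit $\pi$ kills exactly this $H^1$, since the connecting map $\widehat H^0(G,\Z)\to H^1(G,\O_{F_n}^{\times})$ is onto the $3$-part; hence $E_n\otimes\Z_3$ has two consecutive vanishing Tate groups and, $G$ being a $3$-group, is cohomologically trivial, and the $\sigma_c$-eigenspace decomposition splits it as a direct sum of $G$-modules. Two smaller defects: your opening claim that $E_n^-$ consists of honest units fails for $d=1$ (there $\sigma_c$ is to be ignored, $E_n^-=E_n$, and $\xi_1=3$ has nonzero valuation), so that degenerate case needs separate handling in the rank count and the base case; and the implication ``cohomologically trivial $\Rightarrow$ free'' should be justified as: a $\Z_3$-free, cohomologically trivial module is projective, hence free over the local ring $\Z_3[G]$, with the rank then forced to be one by your dimension count.
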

\begin{proof}
We need some preliminary work.
Since keeping track of powers of 2 is irrelevant for what we do, for numbers $a$ and $b$ we use the notation 
$a\approx b$ to say that $a/b$ is a power of 2, up to sign.
When $a, b$ are groups, $a\approx b$ means that $a$ and $b$ are subgroups of some larger 
group $G$ with $[G:a]/[G:b]$ equal to a power of 2.

Since $\sigma_c^2=1$ on $F_n$, the identity $x^2= x^{1-\sigma_c}x^{1+\sigma_c}$, implies 
$$
E_n\approx E_n^- \oplus E_n^+.
$$ 
Let $\{u_1, \dots, u_{3^{n-1}}\}$ be a basis for
$E_n^-$ and $\{v_1, \dots v_{3^{n-1}-1}\}$ be a basis for $E_n^+$ mod $\{\pm 1\}$. The Galois group 
of $F_n/\Q$ 
is given by the elements
$\sigma_j$ and $\sigma_c\sigma_j$, where $\sigma_j$ runs through $\text{Gal}(F_n/F)$. 
These can be used to calculate the regulator $R_n$ of $F_n$, up to
powers of 2. Let
$$
R_n^-=(\log|\sigma_j(u_i)|_{j,i}) \qandq R_n^+=(\log|\sigma_j(v_i)|_{j,i}).
$$
Then $R_n$, up to powers of 2, is the absolute value of the determinant of the matrix
$$
\begin{pmatrix} R_n^- & R_n^+\\ -R_n^- & R_n^+ \end{pmatrix}
$$
with the last row deleted. Adding the top rows to the corresponding bottom rows 
yields a 0-block in the lower left and twice $R_n^+$ in the lower right.
Therefore, 
$$
R_n\approx \det(R_n^-) \det(R_n^+).
$$
Note that $\det(R_n^+)$ is, up to powers of 2, the regulator of $T_n$.

We define the regulator
$$
R_{\xi_n} = \abs{\det(\log|\sigma_j\sigma_i^{-1} \xi_n|)}, 
\qquad i,j \in \Gal(F_n/F)
$$
of the $\Gal(F_n/F)$-conjugates of $\xi_n$. We claim that
$$
\det(R_n^-)\approx \frac{h(F_n)R_{\xi_n}}{h(T_n)}.
$$
holds. (Here, $h(\cdot)$ denotes the class number.) 
Since $R_{\xi_n}/\det(R_n^-)$ is the index in $E_n^-$ of the 
subgroup generated by the conjugates of $\xi_n$, the lemma then
follows from the observation that
both $T_n$ and $F_n$ have class number coprime to 3.

The value $R_{\xi_n}$ is a group determinant, and 
by \cite[Lemma~5.26]{lcw} we have
$$
R_{\xi_n} =\pm \prod_{\psi} \sum_{j} \psi(\sigma_j)\log|\sigma_j \xi_n|,
$$
where $\psi$ ranges over the Dirichlet characters 
for $\Gal(F_n/F)\simeq \Gal(T_n/\Q)$,
and $\sigma_j$ ranges over $\Gal(F_n/F)$. 

We have
$$
\sum_{j} \psi(j)\log|\sigma_j \xi_n|= \sum_{j} \psi(j)\sum_{a}\chi(a) \log|1-\zeta_{3^n d}^{aj}|,
$$
where $1\le a \le 3^n d$, $(a,d)=1$, $a\equiv \pm 1\bmod{3^n}$.
This equals
$$
\sum_{1\le a\le 3^n d,\, (a,3d)=1} \psi(a)\chi(a) \log|1-\zeta_{3^n d}^{a}|.
$$

Recall that if $\psi$ has conductor $3^m$ with $m\ge 1$, then
$$
L(1, \overline{\psi\chi}) = -\frac{g(\overline{\psi\chi})}{3^m d} \sum_{1\le a \le 3^m d,\,  (a,3d)=1}
 \psi(a)\chi(a) \log|1-\zeta_{3^m d}^a|,
$$
where $g(\overline{\psi\chi})$ is a Gau\ss\ sum.
Since the values of $\psi(a)$ depend only on $a \bmod 3^m$, we have, for fixed $a_0$ with $3\nmid a_0$,
$$
\sum_{\substack{1\le a\le 3^n d\\  a\equiv a_0\bmod 3^m d}} \psi(a) \log|1-\zeta_{3^n d}^a| = 
\psi(a_0) \log|1-\zeta_{3^m d}^{a_0}|,
$$
where we have used the identity $\prod_{\omega^{3^{n-m}}=1} (1-\omega x) = 1-x^{3^{n-m}}$.
Therefore, 
$$
\sum_{j} \psi(j)\log|\sigma_j \xi_n| = \frac{3^m d}{g(\overline{\psi\chi})} L(1, \overline{\psi\chi}).
$$

If $\psi$ is trivial, then 
\begin{align*}
\sum_{j}\log|\sigma_j \xi_n|&= \log|\text{Norm}_{F_n/F_1}\xi_n|=\log|\xi_1|\\
&=\sum_{1\le a\le 3d\atop (a,3d)=1} \chi(a) \log|1-\zeta_{3d}^a|.
\end{align*}
For fixed $a_0$, 
$$
\sum_{\substack{a\equiv a_0\bmod d \\ 1\le a \le 3d, (a,3d)=1}} \log|1-\zeta_{3d}^a|
=\log|1-\zeta_{3d}^{3a_0}| - \log|1-\zeta_{3d}^{3a_1}|,
$$
where $3a_1\equiv a_0\pmod d$ and $1\le 3a_1\le 3d$. Therefore,
$$
\eqalign{
\sum_{j}\log|\sigma_j \xi_n|&= \sum_{1\le a_0\le d\atop (a_0, d)=1} \chi(a_0) \log|1-\zeta_d^{a_0}| - \sum_{1\le a_1\le d\atop (a_1, d)=1} \chi(3a_1) \log|1-\zeta_d^{a_1}|\cr
&=(1-\chi(3) )\frac{-d}{g(\chi)}L(1, \chi).
}
$$
Using that $3$ is inert in $F/\Q$, we compute $1-\chi(3)=2$. 

From the analytic class number formula, we derive
$$
\frac{h(F_n)R_n}{\sqrt{\disc(F_n)}}\, \frac{\sqrt{\disc(R_n)}}{h(T_n)R_n^+}\approx  \prod_{\psi} L(1, \overline{\psi\chi}),
$$
where $h$ denotes the class number of the indicated field.
By \cite[Thm.\ 3.11 and Cor.\ 4.6]{lcw}, the Gau\ss\ sums, the discriminants, and the 
conductor $3^m d$ factors cancel, and we obtain
\begin{equation*}
\det(R_n^-)\approx \frac{h(F_n)R_{\xi_n}}{h(T_n)}.
\qedhere
\end{equation*}
\end{proof}

As a byproduct of the calculation with $\psi=1$, we obtain the 
following:
\begin{lemma}\label{basecase} If $d\ne 1$, then $\xi_1=\epsilon_0^{-4h(F)}$, 
where $h(F)$ and $\epsilon_0$ are the class number 
and fundamental unit of $F$. If $d=1$ then $\xi_1=3$.
\end{lemma}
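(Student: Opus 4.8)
The plan is to treat the two cases separately, obtaining the $d=1$ case by a direct computation and the $d\ne 1$ case by combining the $\psi=1$ specialization already carried out in the proof of Lemma~\ref{gpring} with the analytic class number formula for the real quadratic field $F$. For $d=1$ the character $\chi$ is trivial and the defining product for $\xi_1$ runs over the $a$ with $1\le a\le 3$ and $a\equiv\pm 1\bmod 3$, namely $a=1,2$, with $\zeta_{3^n d}=\zeta_3$. Hence $\xi_1=(1-\zeta_3)(1-\zeta_3^2)=\Phi_3(1)=3$, which is the asserted value.

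For $d\ne 1$ I would first record the structural constraint on $\xi_1$. By Lemma~\ref{xiprop}(a) each factor $1-\zeta_{3d}^a$ is a unit (as $3d$ is not a prime power), so $\xi_1$ is a genuine unit of $F$; by Dirichlet's theorem the units of the real quadratic field $F$ are exactly $\pm\epsilon_0^{\Z}$, so $\xi_1=\pm\epsilon_0^a$ for some $a\in\Z$, and the exponent and sign are all that remain to be pinned down. The magnitude is supplied by the $\psi=1$ computation in the proof of Lemma~\ref{gpring}, which gives $\log\abs{\xi_1}=(1-\chi(3))\frac{-d}{g(\chi)}L(1,\chi)$. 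Using that $3$ is inert, so $\chi(3)=-1$ and $1-\chi(3)=2$, that $g(\chi)=\sqrt d$ for the even quadratic character of conductor $d$, and the class number formula $L(1,\chi)=2h(F)\log\epsilon_0/\sqrt d$, I would substitute to obtain
$$
\log\abs{\xi_1}=\frac{-2d}{\sqrt d}\cdot\frac{2h(F)\log\epsilon_0}{\sqrt d}=-4h(F)\log\epsilon_0 .
$$
Choosing the real embedding with $\epsilon_0>1$, this yields $\abs{\xi_1}=\epsilon_0^{-4h(F)}$, hence $\xi_1=\pm\epsilon_0^{-4h(F)}$; since the exponent $-4h(F)$ is even, the displayed value does not depend on the sign convention for $\epsilon_0$.

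The hard part is disambiguating the remaining overall sign, and here I would argue by conjugation-pairing inside the product. The involution $a\mapsto 3d-a$ preserves the index set, since $3d-a\equiv -a\bmod 3$ is still $\equiv\pm 1\bmod 3$ and $(3d-a,d)=(a,d)=1$; because $\chi$ is even we have $\chi(3d-a)=\chi(a)$, while $1-\zeta_{3d}^{\,3d-a}=\overline{1-\zeta_{3d}^{\,a}}$ in the standard embedding. Thus the factors for $a$ and $3d-a$ combine to $\abs{1-\zeta_{3d}^a}^{2\chi(a)}>0$. The involution has no fixed point: the only candidate $a=3d/2$ fails $(a,d)=1$, as $3d/2\notin\Z$ when $d$ is odd and $\gcd(3d/2,d)=d/2>1$ when $d$ is even (a fundamental discriminant has $d\ge 8$ in the even case). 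Therefore $\xi_1$ is a product of positive reals, so $\xi_1>0$ and the sign is $+$, giving $\xi_1=\epsilon_0^{-4h(F)}$. The points to watch are that the magnitude identity must be used exactly here, whereas Lemma~\ref{gpring} only needed it up to powers of $2$, and that $d$ is genuinely the conductor of $\chi$, so that $g(\chi)=\sqrt d$ and the class number formula take the clean forms used above.
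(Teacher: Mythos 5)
Your proof is correct and follows essentially the same route as the paper's: the magnitude $\abs{\xi_1}=\epsilon_0^{-4h(F)}$ comes from rerunning the $\psi=1$ computation of Lemma~\ref{gpring} with exact constants (the ``factors of 2''), the sign is settled by pairing the factors for $a$ and $3d-a$ to see that $\xi_1$ is totally positive, and the case $d=1$ is a direct evaluation giving $\Phi_3(1)=3$. You have simply written out the details that the paper's two-sentence proof leaves implicit.
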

\begin{proof} Up to sign, the case $d\ne 1$ results from keeping track of the factors of 2. 
In the definition of $\xi_1$, we can pair the factors for $a$ and $3d-a$ to see 
that $\xi_1$ is totally positive.  When $d=1$, the result follows directly 
from the definition of $\xi_1$. 
\end{proof}

We have almost done all the preparatory work to construct $\kappa_n$. Indeed,
by Lemma \ref{gpring} we know that $\kappa_n$ is a product of Galois
conjugates of $\xi_n$. To pin down the product, we need the following
standard result.

\begin{lemma}\label{Kummer} Let $m\ge 1$.
Let $M$ be a number field, let $\zeta_{m}$ be a primitive $m$-th root of 
unity, and let $\alpha\in M(\zeta_{m})^{\times}$.
Let $M(\zeta_{m},\alpha^{1/m})/M(\zeta_{m})$
be a cyclic extension of degree $m$. Define a map
$$ 
\omega: \, \Gal(M(\zeta_{m})/M)\to \Z/m\Z
$$
by $\tau(\zeta_m) = \zeta_m^{\omega(\tau)}$. Then
$F(\zeta_{m},\alpha^{1/m})/M$ is Galois with 
abelian Galois group if and only if 
$$ 
\alpha^{\tau-\omega(\tau)} \in \bigl(M(\zeta_{m})^{\times}\bigr)^{m}
$$
holds for all $\tau\in \Gal(M(\zeta_m)/M)$.
\end{lemma}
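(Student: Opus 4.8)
Write $E=M(\zeta_m)$, $G=\Gal(E/M)$, and fix a root $\beta=\alpha^{1/m}$, so that $N:=M(\zeta_m,\alpha^{1/m})=E(\beta)$ and, by hypothesis, $\Gal(N/E)$ is cyclic of order $m$, generated by the element $\gamma$ normalized by $\gamma(\beta)=\zeta_m\beta$. Here $\alpha^{\tau-\omega(\tau)}$ denotes $\tau(\alpha)\,\alpha^{-\omega(\tau)}$, so the criterion reads $\tau(\alpha)\equiv\alpha^{\omega(\tau)}\pmod{(E^\times)^m}$ for all $\tau\in G$. The plan is to decouple the two assertions ``Galois'' and ``abelian''. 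First I would check that the criterion already forces $N/M$ to be normal; then I would compute the conjugation action of $G$ on $\Gal(N/E)$ and observe that the displayed congruence is \emph{exactly} the condition that $\Gal(N/E)$ be central; finally I would upgrade ``central'' to ``abelian''.

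For normality, any $M$-embedding $\sigma$ of $N$ into a fixed algebraic closure $\overline M$ restricts to some $\tau\in G$ on $E$ and satisfies $\sigma(\beta)^m=\tau(\alpha)$. The criterion writes $\tau(\alpha)=\alpha^{\omega(\tau)}\delta^m=(\beta^{\omega(\tau)}\delta)^m$ with $\delta\in E^\times$, so $\sigma(\beta)$ is a root of unity times $\beta^{\omega(\tau)}\delta\in N$; hence $\sigma(N)=N$ and $N/M$ is Galois. Conversely, once $N/M$ is Galois the $G$-orbit of $\overline\alpha$ in $E^\times/(E^\times)^m$ stays inside the cyclic group $\langle\overline\alpha\rangle$, so $\tau(\alpha)=\alpha^{n(\tau)}\delta_\tau^m$ for a unique unit $n(\tau)\in(\Z/m\Z)^\times$ and some $\delta_\tau\in E^\times$. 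Choosing a lift $\tilde\tau\in\Gal(N/M)$ of $\tau$ gives $\tilde\tau(\beta)=\zeta_m^{\,i_\tau}\beta^{n(\tau)}\delta_\tau$, and the heart of the argument is the commutator identity $\tilde\tau\,\gamma\,\tilde\tau^{-1}=\gamma^{\,\omega(\tau)\,n(\tau)^{-1}}$: writing $\tilde\tau^{-1}(\beta)=\zeta_m^{a}\beta^{n(\tau)^{-1}}\delta_{\tau^{-1}}$ and applying first $\gamma$ and then $\tilde\tau$ shows that $\gamma$ is sent to $\gamma^{\,\omega(\tau)\,n(\tau)^{-1}}$. Therefore $\Gal(N/E)$ is central in $\Gal(N/M)$ if and only if $n(\tau)\equiv\omega(\tau)\pmod m$ for every $\tau$, which is precisely the criterion.

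This settles the direction ``abelian $\Rightarrow$ criterion'' at once, since in an abelian group every subgroup is central. For the converse I must still pass from ``$\Gal(N/E)$ central'' to ``$\Gal(N/M)$ abelian'', and I expect this to be the real obstacle: a central extension of $G$ by the cyclic group $\Gal(N/E)$ need not be abelian, the obstruction being the alternating commutator pairing on $G$ with values in $\mu_m$. That pairing vanishes, however, as soon as $G=\Gal(M(\zeta_m)/M)$ is cyclic, for then $\Gal(N/M)$ is generated by the central subgroup $\Gal(N/E)$ together with a single lift of a generator of $G$, and such a group is automatically abelian. In all our applications $m=3^n$ and $G$ is a subgroup of the cyclic group $(\Z/3^n\Z)^\times$, so $G$ is cyclic, the pairing is trivial, and the implication ``criterion $\Rightarrow$ abelian'' goes through; this is the only place where the cyclicity of the cyclotomic Galois group is used.
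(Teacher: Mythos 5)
Your argument is the ``standard calculation with the Kummer pairing'' that the paper's one-line proof delegates to \cite[Thm.~14.7]{lcw}, so in spirit it is the same approach; the paper itself gives no details to compare against. Your computations are correct: the criterion forces $\sigma(N)=N$, the identity $\tilde\tau\gamma\tilde\tau^{-1}=\gamma^{\omega(\tau)n(\tau)^{-1}}$ is right (and it is worth noting that $n(\tau)$ is well defined and a unit precisely because $\overline\alpha$ has exact order $m$, i.e.\ because the hypothesis $[N:E]=m$ is in force), and the direction ``abelian $\Rightarrow$ criterion'' follows cleanly.

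The one genuine gap is the point you flag yourself: for ``criterion $\Rightarrow$ abelian'' you only obtain that $\Gal(N/E)$ is central, and you complete the argument only when $G=\Gal(M(\zeta_m)/M)$ is cyclic. The lemma as stated allows arbitrary $m\ge 1$ and arbitrary $M$, where $G$ can be non-cyclic (already $M=\Q$, $m=8$), so strictly speaking the stated lemma is not proved. What is missing is a proof that the alternating commutator pairing vanishes: writing $\tau(\alpha)=\alpha^{\omega(\tau)}\delta_\tau^m$, the commutator of lifts is governed by $c(\tau,\rho)=\delta_\tau^{\omega(\rho)}\tau(\delta_\rho)\delta_\rho^{-\omega(\tau)}\rho(\delta_\tau)^{-1}\in\mu_m$, which is independent of all choices, and centrality alone does not force $c=1$. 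This is not a formal triviality: dropping only the hypothesis $[N:E]=m$, one finds elements (e.g.\ $\alpha=(1+\sqrt2)^4$ over $M=\Q$, $m=8$) satisfying the congruence criterion whose radical extension is \emph{not} abelian over $\Q$, so any proof of the general case must use the exact-order-$m$ hypothesis globally, not just group-theoretically. That said, in every application in this paper one has $m=3^n$ and $\Gal(M(\zeta_{3^n})/M)$ is a subgroup of the cyclic group $(\Z/3^n\Z)^*$, so your argument does cover everything the paper needs; if you want the lemma as stated, either add the cyclicity hypothesis or supply the extra computation showing $c(\tau,\rho)=1$ in general.
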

\begin{proof} The proof is a standard calculation with the Kummer pairing. 
See for instance the proof of \cite[Thm. 14.7]{lcw}. 
\end{proof}

Choose $\tau\in \Gal(F_n/F)$ satisfying $\tau(\zeta_{3^n})=\zeta_{3^n}^4$. 
We have
$$
\kappa_n=\prod_{j=0}^{3^{n-1}-1} \tau^j(\xi_{n})^{c_j},
$$
for some integers $c_j$. Therefore, taking indices mod $3^{n-1}$, we have
$$
\kappa_n^{\tau} = \prod_{j=1}^{3^{n-1}} \tau^{j}(\xi_{n})^{c_{j-1}}.
$$
Lemma \ref{Kummer} says that $\kappa_n^{\tau-4}$ is a $3^n$-th power, and 
since the elements $\tau^j(\xi_{n})$ are multiplicatively independent,
we must have
$$
c_{j-1}-4c_{j}\equiv 0\pmod{3^n}, \qquad 1\le j\le 3^{n-1}.
$$
This implies that each $c_j$ is uniquely determined mod $3^n$ by the 
value of $c_0$. Therefore, $\kappa_n$
is uniquely determined up to an integral power and mod $3^n$-th powers. 
Therefore, if we find $\kappa_n\in F_n$ such that
\begin{enumerate}
\item $\kappa_n^{\tau-4}$ is a $3^n$th power
\smallbreak
\item $\kappa_n$ is not a cube in $F_n$,
\end{enumerate} 
then we have a Kummer generator for $K_n(\zeta_{3^n})/K(\zeta_{3^n})$.

For $i\ge 1$, define
$$
B_i=\prod_{j=1}^{i-1}\Biggl(\frac{1+4^{3^{j-1}}+16^{3^{j-1}}}{3}\Biggr).
$$
Let
$$
b_i=(1-B_i)/3,
$$
which is an integer for all $i\ge 1$.
Finally, for $i\ge 2$, let
$$
D_i(x)=\frac{3(b_i(x-1)-1)-(1+x^{3^{i-1}}+x^{2\cdot 3^{i-1}})(b_{i-1}(x-1)-1)}{x-4}.
$$
Note that the numerator of $D_i(x)$ evaluated at $x=4$ is
%\begin{gather*}
$$
3(3b_i-1)-(1+4^{3^{i-1}}+16^{3^{i-1}})(3b_{i-1}-1)%\\
= 3(-B_i)+(1+4^{3^{i-1}}+16^{3^{i-1}})B_{i-1}=0,
$$
%\end{gather*}
so $D_i$ has integer coefficients. For example, $D_2(x)=x-1$.

Let
$$
\delta_i=\xi_i^{D_i(\tau)} \text{ for } i\ge 2, \quad 
\beta_i=\xi_i^{b_i(\tau-1)-1} \text{ for } i\ge 1.
$$
Then $\xi_i, \beta_i, \delta_i\in F_i$, and
$$
\delta_i^{\tau-4} = \frac{\beta_i^3}{\beta_{i-1}}
$$
for $i\ge 2$.  Moreover,
$$
\beta_1=\xi_1^{b_1(\tau-1)-1}=\xi_1^{-1}.
$$

\begin{definition}\label{kappagen}
Let  $\kappa_1=\xi_1$, and for $n\ge 2$ let
$$
\kappa_n=\xi_1\delta_2^3\cdots \delta_{n}^{3^{n-1}}\in F_n\subset K(\zeta_{3^{n}}).
$$
\end{definition}

We have
\begin{align*}
\kappa_n^{\tau-4}&=\xi_1^{-3}\frac{\beta_2^9}{\beta_1^3}\frac{\beta_3^{27}}{\beta_2^9} \cdots 
\frac{\beta_{n}^{3^n}}{\beta_{n-1}^{3^{n-1}}}\\
&= \beta_{n}^{3^n}.
\end{align*}

\begin{lemma}\label{final}
$\kappa_n$ is not a cube in $K(\zeta_{3^n})$.
\end{lemma}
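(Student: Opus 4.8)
The plan is to reduce the statement in three stages: first from $K(\zeta_{3^n})$ down to the totally real field $F_n$, then from $\kappa_n$ down to $\xi_1$ modulo cubes, and finally to prove that $\xi_1$ itself is not a cube in $F_n$ by a Galois-theoretic argument. For the first stage I would exploit that $\kappa_n \in F_n$ is totally real while $K(\zeta_{3^n})$ is a CM field with maximal real subfield $F_n$, so complex conjugation $J$ generates $\Gal(K(\zeta_{3^n})/F_n)$. If $\kappa_n = \gamma^3$ with $\gamma \in K(\zeta_{3^n})$, then applying $J$ and using $J(\kappa_n)=\kappa_n$ gives $(\gamma/J(\gamma))^3 = 1$, so $\gamma/J(\gamma)$ is a cube root of unity, and these all lie in $K(\zeta_{3^n})$ because $\zeta_3 = \zeta_{3^n}^{3^{n-1}}$. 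Multiplying $\gamma$ by a suitable power of $\zeta_3$ (which leaves $\gamma^3$ unchanged) I can arrange $\gamma/J(\gamma)=1$, i.e. $\gamma \in F_n$. Hence a cube in $K(\zeta_{3^n})$ is already a cube in $F_n$, and it suffices to work over $F_n$.

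Next, from the explicit formula $\kappa_n = \xi_1\,\delta_2^3\cdots\delta_n^{3^{n-1}}$ of Definition \ref{kappagen}, every factor $\delta_i^{3^{i-1}}$ with $i\ge 2$ is a cube in $F_n$, since $3 \mid 3^{i-1}$ and $\delta_i \in F_i \subseteq F_n$. Thus $\kappa_n \equiv \xi_1 \pmod{(F_n^*)^3}$, and I am reduced to showing that $\xi_1$ is not a cube in $F_n$. By Lemma \ref{basecase}, $\xi_1 = \epsilon_0^{-4h(F)}$ when $d\ne 1$ and $\xi_1 = 3$ when $d=1$; since $3\nmid 4h(F)$, in the first case $\xi_1$ is a cube in $F_n$ if and only if $\epsilon_0$ is. The key point is that $F_n \subseteq \Q(\zeta_{3^nd}) = L$ is abelian over $\Q$, so $F_n/F$ is abelian and every intermediate field is Galois over $F$. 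On the other hand, adjoining a real cube root of $\epsilon_0$ (resp. of $3$, when $F=\Q$) produces a non-Galois cubic extension of $F$, because $F$ is totally real so $\zeta_3\notin F$, while $\epsilon_0$ (resp. $3$) is not already a cube in $F$. A non-Galois cubic cannot sit inside the abelian extension $F_n/F$, so $\epsilon_0^{1/3}$ (resp. $3^{1/3}$) does not lie in $F_n$; therefore $\xi_1$ is not a cube in $F_n$, which finishes the argument.

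I expect the main obstacle to be the middle reduction's bookkeeping and, above all, the verification that $F_n$ is abelian over its base: this is precisely what lets me trade the delicate unit-index information of Lemma \ref{gpring} for the clean statement that a non-Galois cubic cannot embed into $F_n$. Care is also needed to confirm that $\xi_1$ is genuinely a non-cube in $F$ (that $\epsilon_0$ is a fundamental unit and hence not a cube, and that $-4h(F)$ is prime to $3$), and to treat the degenerate case $d=1$ uniformly, where $F=\Q$, $F_n = T_n$, and $\xi_1 = 3$. As an alternative to the last stage, one could argue straight from Lemma \ref{gpring}: writing $\xi_1 = N_{F_n/F}(\xi_n) = \prod_{\sigma_j\in\Gal(F_n/F)}\sigma_j(\xi_n)$ and using that the $\sigma_j(\xi_n)$ are independent modulo cubes, the product is the ``all-ones'' combination and so cannot be a cube.
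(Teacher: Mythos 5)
Your proof is correct and follows essentially the same route as the paper: reduce $\kappa_n$ to $\xi_1$ modulo cubes via Definition \ref{kappagen}, use $3\nmid h(F)$ and Lemma \ref{basecase} to see that $\xi_1$ is not a cube in $F$, and then observe that the resulting non-Galois cubic $F(\sqrt[3]{\xi_1})/F$ cannot embed in an abelian extension of $F$. Your first stage (descending from $K(\zeta_{3^n})$ to $F_n$ via complex conjugation) is valid but unnecessary, since $K(\zeta_{3^n})=F(\zeta_{3^n})$ is already abelian over $F$ and the disjointness argument applies to it directly, which is how the paper phrases it.
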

\begin{proof}
\meshing
The lemma is equivalent to $\xi_1$ not being a cube in $\Q(\zeta_{3^n})$. 
Our assumption $3\nmid h(F)$
implies that
$\xi_1=\epsilon_0^{-4h(F)}$ is not a cube in $F$ (when $d\ne 1$; 
the case $d=1$ is trivial), so $x^3-\xi_1$ generates a non-Galois cubic 
extension of $F$ that must be disjoint from every abelian extension.
Therefore, $\sqrt[3]{\xi_1}\not\in  K(\zeta_{3^n})$.
\end{proof}

\begin{proof}[Proof of Thm.\ \ref{kummerthm}]
Lemma \ref{final} implies that
$$
K(\zeta_{3^n}^{\phantom{1}})(\sqrt[3^n]{\kappa_n}) / K(\zeta_{3^n}^{\phantom{1}})
$$
is cyclic of order $3^n$.
Since $\kappa_n^{\tau-4}$ is a $3^n$-th power and $\kappa_n$ is real, it is the 
desired Kummer generator for $K_n(\zeta_{3^n})/K(\zeta_{3^n})$.
\end{proof}

\begin{example} For $K = \Q(\sqrt{-3})$, we have $\kappa_1=3$ and hence
$K_1 = \Q(\sqrt{-3},3^{1/3})$. 

To obtain the second layer, we compute $D_2(x) = x-1$ and
$$
\kappa_2= 3
\biggl(
\frac{(1-\zeta_9^4)(1-\zeta_9^{-4})}{(1-\zeta_9)(1-\zeta_9^{-1})}
\biggr)^3
 = 
3
\biggl(
\frac{1-\cos(8\pi/9)}{1-\cos(2\pi/9)}
\biggr)^3
.
$$
We compute that $\kappa_2$ is a root of $x^3 - 1710x^2 + 513x - 27$, and
$\kappa_2^{1/9}$ is therefore a root of
$$
x^{27}-1710x^{18}+513x^9-27.
$$
Having found the extension $K(\zeta_9)(\kappa_2^{1/9})/K(\zeta_9)$,
we proceed as in \cite[pp.\ 514--515]{cohenpsh} to descend to the extension
$K_2/K$. We compute that $K_2$ is generated over $K$ by a root of
$$
x^9 - 59049x^3 + 4251528\sqrt{-3}.
$$
\end{example}

\begin{example}
Fix $K = \Q(\sqrt{-87})$. For the first layer, we compute
that $\kappa_1$ is a root of $x^2-727x+1$. Instead of following the descent
procedure from \cite{cohenpsh}, we can also use the following argument to
compute $K_1/K$. We replace $x$ by $x^3$ and take the compositum with
$x^2+3$ to obtain a degree $12$ polynomial 
defining $K(\zeta_9)(\kappa_1^{1/3})/\Q$. This field has 7 subfields of
degree 6. We test these fields pairwise for isomorphism, and compute that
there is a unique field that is not isomorphic to another field. Hence,
this is the unique field that is Galois over $\Q$ and must equal $K_1$. 
Applying lattice basis reduction to the default generator of $K_1/\Q$ gives the 
polynomial
$$
x^6 - 3x^5 + 13x^4 - 21x^3 + 43x^2 - 33x + 9.
$$
%for $K_1/\Q$.

To obtain $K_2$, we compute that $\kappa_2$ is a root of
{
\setlength{\jot}{3pt}
\begin{align*}
x^6 - &3298753006106830814034741x^5 + 8591489279598602990016127145116806x^4\\
& - 28320363968461011184065689777889416199793x^3\\
& + 8591489279598602990016127145116806x^2\\
& - 3298753006106830814034741x + 1.
\end{align*}
}\null
The same technique as for $K_1$ gives that there are {\it two\/} subfields
of $K_1(\zeta_9)(\kappa^{1/9})/\Q$ that are Galois over $\Q$. Since one
of them is the known field $K_1T_1$, we select the field $K_2$ to
be the other subfield that is Galois over $\Q$. A generating polynomial
is given in Example \ref{exref}.
\end{example}

\section*{Acknowledgements}
We thank the referees for valuable suggestions on an earlier version of this
paper.

\providecommand{\bysame}{\leavevmode\hbox to3em{\hrulefill}\thinspace}
\providecommand{\MR}{\relax\ifhmode\unskip\space\fi MR }
% \MRhref is called by the amsart/book/proc definition of \MR.
\providecommand{\MRhref}[2]{%
  \href{http://www.ams.org/mathscinet-getitem?mr=#1}{#2}
}
\providecommand{\href}[2]{#2}

%\begin{thebibliography}{9}% Replace 9 by 99 if 10 or more references
%%
%% Please note the use of "\and" between author names below
%%
%\bibitem{Sch}
%A.~Scholz, `Uber die Beziehung der Klassenzahlen quadratischer K\"orper zueinander',
%J. reine angew. Math., 166 (1932), 201--203.
%
%\bibitem{AECM}
%A.~Enge, `CM software package', available at {\tt http://cm.multiprecision.org}
%
%\bibitem{psh}
%
%
%\end{thebibliography}{9}% Replace 9 by 99 if 10 or more references

\end{document}